\newtheorem{thm}[subsection]{Theorem}
\newtheorem{defn}[subsection]{Definition}
\newtheorem{cor}[subsection]{Corollary}
\newtheorem{lemma}[subsection]{Lemma}
\newtheorem{remark}[subsection]{Remark}
\theoremstyle{definition}
\newtheorem{example}[subsection]{Example}
\numberwithin{equation}{section}
\def\quot#1#2{#1/\!\!/#2}
\def\C{\mathbb {C}}
\def\SL{\operatorname{SL}}
\def\phi{{\varphi}}
\def\ra{\rightarrow}
\def\bra{\langle}
\def\ket{\rangle}
\def\cA{{\mathcal A}}
\def\cB{{\mathcal B}}
\def\cD{{\mathcal D}}
\def\cE{{\mathcal E}}
\def\cH{{\mathcal H}}
\def\cI{{\mathcal I}}
\def\cO{{\mathcal O}}
\def\cR{{\mathcal R}}
\def\cS{{\mathcal S}}
\def\cV{{\mathcal V}}
\def\cW{{\mathcal W}}
\def\gg{{\mathfrak g}}
\def\gl{{\mathfrak l}}
\def\go{{\mathfrak o}}
\def\gp{{\mathfrak p}}
\def\gs{{\mathfrak s}}
\DeclareMathOperator{\Sp}{Sp}
\newfont{\german}{eufm10}
\begin{document}
\pagestyle{plain}

\title
{Arc spaces and the vertex algebra commutant problem}

\author{Andrew R. Linshaw, Gerald W. Schwarz, and Bailin Song}
\address{Department of Mathematics, University of Denver}
\email{andrew.linshaw@du.edu}

\address{Department of Mathematics, Brandeis University}
\email{schwarz@brandeis.edu}

\address{Key Laboratory of Wu Wen-Tsun Mathematics, Chinese Academy of Sciences, School of Mathematical Sciences, University of Science and Technology of China}
\email{bailinso@ustc.edu.cn}

\begin{abstract} Given a vertex algebra $\cV$ and a subalgebra $\cA\subset \cV$, the commutant $\text{Com}(\cA,\cV)$ is the subalgebra of $\cV$ which commutes with all elements of $\cA$. This construction is analogous to the ordinary commutant in the theory of associative algebras, and is important in physics in the construction of coset conformal field theories. When $\cA$ is an affine vertex algebra, $\text{Com}(\cA,\cV)$ is closely related to rings of invariant functions on arc spaces. We find strong finite generating sets for a family of examples where $\cA$ is affine and $\cV$ is a $\beta\gamma$-system, $bc$-system, or $bc\beta\gamma$-system.

\end{abstract}

\maketitle
\section{Introduction}

Let $\cV$ be a vertex algebra, and let $\cA$ be a subalgebra of $\cV$. The {\it commutant} of $\cA$ in $\cV$, denoted by $\text{Com}(\cA,\cV)$, is the subalgebra consisting of all elements $v\in\cV$ such that $ [a(z),v(w)] = 0$ for all $a\in \cA$. This construction was introduced by Frenkel and Zhu in \cite{FZ}, generalizing earlier constructions in representation theory \cite{KP} and physics \cite{GKO}, and is important in the construction of coset conformal field theories. It is also natural to study the double commutant $\text{Com}(\text{Com}(\cA,\cV),\cV)$, which always contains $\cA$. If $\cA = \text{Com}(\text{Com}(\cA,\cV),\cV)$, we say that $\cA$ and $\text{Com}(\cA,\cV)$ form a {\it Howe pair} inside $\cV$. If $\cA$ acts semisimply on $\cV$, $\text{Com}(\cA,\cV)$ can be studied by decomposing $\cV$ as an $\cA$-module. Otherwise, there are few existing techniques for studying commutants, and there are very few examples where an exhaustive description can be given in terms of generators, operator product expansions, and normally ordered polynomial relations among the generators. 

An equivalent definition of $\text{Com}(\cA,\cV)$ is the set of elements $v\in \cV$ such that $a\circ_n v = 0$ for all $a\in\cA$ and $n\geq 0$. We may regard $\text{Com}(\cA,\cV)$ as the algebra of  invariants in $\cV$ under the action of $\cA$. If $\cA$ is a homomorphic image of an affine vertex algebra associated to some Lie algebra $\gg$, $\text{Com}(\cA,\cV)$ is just the invariant space $\cV^{\gg[t]}$, and in this case one can apply techniques from invariant theory and commutative algebra. This approach was introduced in \cite{LL}, and the structure that makes it work is a {\it good increasing filtration} on $\cV$. The associated graded object $\text{gr}(\cV)$ is then an abelian vertex algebra, i.e., a (super)commutative ring with a differential, and in many cases it can be interpreted as the ring $\cO(X_{\infty})$ of functions on the arc space $X_{\infty}$ of some scheme $X$. For example, the level $k$ universal affine vertex algebra $V_k(\gg)$ of a Lie algebra $\gg$ has a filtration for which $\text{gr}(V_k(\gg)) \cong \cO(\gg_{\infty})$ and the $\beta\gamma$-system $\cS(V)$ of a finite-dimensional vector space $V$ has $\text{gr}(\cS(V)) \cong \cO((V\oplus V^*)_{\infty})$. For any $X$, $\cO(X_{\infty})$ is an abelian vertex algebra \cite{FBZ}, and one of the themes of this paper is that the geometry of arc spaces can be used to answer structural questions about nonabelian vertex algebras as well.

If $\cA\subset \cV$ is an affine vertex algebra and $\cV$ has a $\gg[t]$-invariant good increasing filtration, there is an action of $\gg[t]$ on $\text{gr}(\cV)$ by derivations of degree zero. There is an injective map of differential (super)commutative algebras 
\begin{equation} \label{first} \text{gr}(\cV^{\gg[t]})\hookrightarrow \text{gr}(\cV)^{\gg[t]}.\end{equation} Unfortunately, the associated graded functor and the invariant functor need not commute with each other, and this map is generally not an isomorphism. The structure of $\text{gr}(\cV)^{\gg[t]}$ is simpler than that of $\text{gr}(\cV^{\gg[t]})$, and is closely related to rings of invariant functions on arc spaces. If $\text{gr}(\cV)^{\gg[t]}$ is finitely generated as a differential algebra (which is the case in our main examples), checking the surjectivity of (\ref{first}) becomes a finite problem. If (\ref{first}) is surjective, there is a reconstruction theorem that says that a generating set for $\text{gr}(\cV)^{\gg[t]}$ as a differential algebra corresponds to a strong generating set for $\cV^{\gg[t]}$ as a vertex algebra. Moreover, all normally ordered polynomial relations among the generators of $\cV^{\gg[t]}$ correspond to classical relations in $\text{gr}(\cV)^{\gg[t]}$, with suitable quantum corrections.

In our main examples, $\cV$ is the $\beta\gamma$-system $\cS(V)$, which is the vertex algebra analogue of the Weyl algebra $\cD(V)$. If $G$ is a connected, reductive Lie group with Lie algebra $\gg$ acting on $V$, there is an action of the corresponding affine vertex algebra on $\cS(V)$, and the commutant $\cS(V)^{\gg[t]}$ is analogous to the invariant ring $\cD(V)^G$. The injective map 
\begin{equation} \label{firstsv} \text{gr}(\cS(V)^{\gg[t]}) \hookrightarrow \text{gr}(\cS(V))^{\gg[t]} \cong \cO((V\oplus V^*)_{\infty})^{\gg[t]} = \cO((V\oplus V^*)_{\infty})^{G_{\infty}}\end{equation} is generally not an isomorphism, and $\cO((V\oplus V^*)_{\infty})^{G_{\infty}}$ generally has a complicated structure and need not be finitely generated as a differential algebra. In our main examples, $G = \text{SL}_n$, $\text{GL}_n$, $\text{SO}_n$, or $\text{Sp}_{2n}$ and $V$ is a sum of copies of the standard representation such that $(V\oplus V^*)/\!\!/G$ is either smooth or a complete intersection. In these cases, $\cO((V\oplus V^*)_{\infty})^{G_{\infty}} \cong \cO(((\quot{V\oplus V^*)}G)_{\infty})$, which is generated by $\cO(V\oplus V^*)^G$ as a differential algebra. Moreover, (\ref{firstsv}) is an isomorphism, so the generators of $\cO(V\oplus V^*)^G$ correspond to strong generators of $\cS(V)^{\gg[t]}$ as a vertex algebra. We also consider the double commutant $\text{Com}(\cS(V)^{\gg[t]},\cS(V))$ and we find some analogues of classical $\text{GL}_n - \text{GL}_m$, $\text{SO}_n-\gs\gp_{2m}$, and $\text{Sp}_{2n} - \gs\go_{2m}$ Howe duality in this setting \cite{H}. Finally, we use similar techniques to describe some commutant subalgebras of $bc$-systems and $bc\beta\gamma$-systems.

Our methods are insufficient to describe $\cS(V)^{\gg[t]}$ in the general case without further refinement. However, there is evidence that $\cS(V)^{\gg[t]}$ is better behaved than its classical counterpart $\text{gr}(\cS(V))^{\gg[t]}$. Suppose that $G$ is a torus acting faithfully on $V$. In this case, \eqref{first} is not surjective, but $\cS(V)^{\gg[t]}$ is always strongly finitely generated as a vertex algebra (see Theorem 12 of \cite{LII}). By contrast, $\text{gr}(\cS(V))^{\gg[t]}$ need not be finitely generated as a differential algebra; Corollary 3.14 of \cite{LSS} shows that this fails whenever $V$ possesses a nontrivial slice representation of a finite group. Similarly, in the case where $G$ is finite, so that $G = G_{\infty}$, $\cS(V)^G$ is always strongly finitely generated (see Theorem 10 of \cite{LII}), whereas $\text{gr}(\cS(V))^G$ is not finitely generated as a differential algebra unless $G$ acts trivially on $V$ (see Theorem 3.13 of \cite{LSS}). Based on these examples, we expect that $\cS(V)^{\gg[t]}$ will be strongly finitely generated under fairly general circumstances, but in most cases the generating set will not correspond naively to a generating set for $\cO(V\oplus V^*)^G$.

\section{Vertex algebras}
We begin with a short introduction to vertex algebras \cite{B,FLM}, following the formalism developed in \cite{LZ} and partly in \cite{LiI}. Let $V=V_0\oplus V_1$ be a super vector space over $\mathbb{C}$, and let $z,w$ be formal variables. Let $\text{QO}(V)$ denote the space of linear maps $$V\ra V((z)):=\{\sum_{n\in\mathbb{Z}} v(n) z^{-n-1}|
v(n)\in V,\ v(n)=0\ \text{for} \  n >\!\!> 0 \}.$$ Each $a\in \text{QO}(V)$ can be
uniquely represented as a power series
$$a=a(z):=\sum_{n\in\mathbb{Z}}a(n)z^{-n-1}\in \text{End} (V)[[z,z^{-1}]].$$ We
call $a(n)$ the $n^{\text{th}}$ Fourier mode of $a(z)$. Each $a\in
\text{QO}(V)$ is assumed to be of the form $a=a_0+a_1$ where $a_i:V_j\ra V_{i+j}((z))$ for $i,j\in\mathbb{Z}/2\mathbb{Z}$, and we write $|a_i| = i$.

There is a set of nonassociative bilinear operations
$\circ_n$ on $\text{QO}(V)$, indexed by $n\in\mathbb{Z}$, which we call the $n^{\text{th}}$ circle
products. For homogeneous $a,b\in \text{QO}(V)$, they are defined by
$$
a(w)\circ_n b(w)=\text{Res}_z a(z)b(w)~\iota_{|z|>|w|}(z-w)^n-
(-1)^{|a||b|}\text{Res}_z b(w)a(z)~\iota_{|w|>|z|}(z-w)^n.
$$
Here $\iota_{|z|>|w|}f(z,w)\in\mathbb{C}[[z,z^{-1},w,w^{-1}]]$ denotes the
power series expansion of a rational function $f$ in the region
$|z|>|w|$. We usually omit the symbol $\iota_{|z|>|w|}$ and just
write $(z-w)^{-1}$ to mean the expansion in the region $|z|>|w|$,
and write $-(w-z)^{-1}$ to mean the expansion in $|w|>|z|$. It is
easy to check that $a(w)\circ_n b(w)$ above is a well-defined
element of $\text{QO}(V)$.

For $a,b\in \text{QO}(V)$, the {\it operator product expansion} (OPE) formula is the following identity of formal power series.
 \begin{equation}\label{opeform} a(z)b(w)=\sum_{n\geq 0}a(w)\circ_n
b(w)~(z-w)^{-n-1}+:a(z)b(w):. \end{equation} 
Here $:a(z)b(w):\ =a(z)_-b(w)\ +\ (-1)^{|a||b|} b(w)a(z)_+$, where $a(z)_-=\sum_{n<0}a(n)z^{-n-1}$ and $a(z)_+=\sum_{n\geq
0}a(n)z^{-n-1}$. Equation \eqref{opeform} is often written in the form
$$a(z)b(w)\sim\sum_{n\geq 0}a(w)\circ_n b(w)\ (z-w)^{-n-1},$$ where
$\sim$ means equal modulo the term $:a(z)b(w):$, which is regular at $z=w$. 

Note that $:a(w)b(w):$ is a well-defined element of
$\text{QO}(V)$. It is called the Wick product (or normally ordered product) of $a$ and $b$, and it
coincides with $a(w)\circ_{-1}b(w)$. The other negative circle products are related to this by
$$ n!\ a(z)\circ_{-n-1}b(z)=\ :(\partial^n a(z))b(z):,$$ where $\partial$ denotes the formal differentiation operator $\frac{d}{dz}$. The iterated Wick product of $a_1(z),\dots ,a_k(z)\in \text{QO}(V)$ is defined inductively by
\begin{equation}\label{iteratedwick} :a_1(z)a_2(z)\cdots a_k(z): \ =\  :a_1(z)b(z):,\end{equation} where $b(z)=\ :a_2(z)\cdots a_k(z):$. We often omit the formal variable $z$ when no confusion can arise.

The set $\text{QO}(V)$ is a nonassociative algebra with the operations
$\circ_n$ and a unit $1$. We have $1\circ_n a=\delta_{n,-1}a$ for
all $n$, and $a\circ_n 1=\delta_{n,-1}a$ for $n\geq -1$. A subspace $\cA\subset \text{QO}(V)$ containing $1$ which is closed under the circle products will be called a quantum operator algebra (QOA).
In particular, $\cA$ is closed under $\partial$
since $\partial a=a\circ_{-2}1$. Many formal algebraic
notions are immediately clear: a homomorphism is just a linear
map that sends $1$ to $1$ and preserves all circle products; a module over $\cA$ is a
vector space $M$ equipped with a homomorphism $\cA\rightarrow
\text{QO}(M)$, etc. A subset $S=\{a_i|\ i\in I\}$ of $\cA$ is said to generate $\cA$ if every element $a\in\cA$ can be written as a linear
combination of nonassociative words in the letters $a_i$, $\circ_n$, for
$i\in I$ and $n\in\mathbb{Z}$. We say that $S$ {\it strongly generates} $\cA$ if any $a\in\cA$ can be written as a linear combination of words in the letters $a_i$, $\circ_n$ for $n<0$. Equivalently, $\cA$ is spanned by the set of normally ordered monomials $\{ :\partial^{k_1} a_{i_1}(z)\cdots \partial^{k_m} a_{i_m}(z):| \ i_1,\dots,i_m \in I,~ k_1,\dots,k_m \geq 0\}$.

We say that $a,b\in \text{QO}(V)$ {\it quantum commute} if $(z-w)^N
[a(z),b(w)]=0$ for some $N\geq 0$. Here $[,]$ denotes the super bracket. This condition implies that $a\circ_n b = 0$ for $n\geq N$, so (\ref{opeform}) becomes a finite sum. A {\it commutative quantum operator algebra} (CQOA) is a QOA whose elements pairwise quantum commute. Finally, the notion of a CQOA is equivalent to the notion of a vertex algebra. Every CQOA $\cA$ is itself a faithful $\cA$-module, called the {\it left regular
module}. Define
$$\rho:\cA\rightarrow \text{QO}(\cA),\qquad a\mapsto\hat a, \qquad \hat
a(\zeta)b=\sum_{n\in\mathbb{Z}} (a\circ_n b)~\zeta^{-n-1}.$$ Then $\rho$ is an injective QOA homomorphism,
and the quadruple $(\cA,\rho,1,\partial)$ is a vertex
algebra in the sense of \cite{FLM}. Conversely, if $(V,Y,{\bf 1},D)$ is
a vertex algebra, $Y(V)\subset \text{QO}(V)$ is a
CQOA. {\it We will refer to a CQOA simply as a
vertex algebra throughout this paper}.

\begin{example}[Affine vertex algebras] Let $\gg$ be a finite-dimensional, complex Lie algebra, equipped with a symmetric, invariant bilinear form $B$. The loop algebra $\gg[t,t^{-1}] = \gg\otimes \mathbb{C}[t,t^{-1}]$ has a one-dimensional central extension $\hat{\gg} = \gg[t,t^{-1}]\oplus \mathbb{C}\kappa$ determined by $B$, with bracket $$[\xi t^n, \eta t^m] = [\xi,\eta] t^{n+m} + n B(\xi,\eta) \delta_{n+m,0} \kappa,$$ and $\mathbb{Z}$-gradation $\text{deg}(\xi t^n) = n$, $\text{deg}(\kappa) = 0$. Let $\hat{\gg}_{\geq 0} = \bigoplus_{n\geq 0} \hat{\gg}_n$ where $\hat{\gg}_n$ denotes the subspace of degree $n$, and let $C$ be the one-dimensional $\hat{\gg}_{\geq 0}$-module on which $\xi t^n$ acts trivially for $n\geq 0$, and $\kappa$ acts by $k$ times the identity. Define $V = U(\hat{\gg})\otimes_{U(\hat{\gg}_{\geq 0})} C$, and let $X^{\xi}(n)\in \text{End}(V)$ be the linear operator representing $\xi t^n$ on $V$. Define $X^{\xi} (z) = \sum_{n\in\mathbb{Z}} X^{\xi} (n) z^{-n-1}$, which lies in $\text{QO}(V)$ and satisfies $$X^{\xi}(z)X^{\eta} (w)\sim kB(\xi,\eta) (z-w)^{-2} + X^{[\xi,\eta]}(w) (z-w)^{-1} .$$ The vertex algebra $V_k(\gg,B)$ generated by $\{X^{\xi}| \ \xi \in\gg\}$ is known as the {\it universal affine vertex algebra} associated to $\gg$ and $B$ at level $k$. If $\gg$ is a simple Lie algebra, we will always take $B$ to be the normalized Killing form $\frac{1}{2 h^{\vee}} \bra, \ket_K$, where $h^{\vee}$ is the dual Coxeter number, and we use the notation $V_k(\gg)$ instead of $V_k(\gg,B)$, as is customary. 

For $k \neq - h^{\vee}$, $V_k(\gg)$ possesses the Sugawara conformal vector
\begin{equation}
L = \frac{1}{2(k+h^\vee)}\sum_{i=1}^n :X^{\xi_i}X^{\xi_i}:
\end{equation}
of central charge $c= \frac{k\text{dim}(\gg)}{k+h^\vee}$. Here $\{\xi_1,\dots, \xi_n\}$ is an orthonormal basis for $\gg$. With respect to $L$, $V_k(\gg)$ has a weight grading by $\mathbb{Z}_{\geq 0}$ such that each $X^{\xi_i}$ is primary of weight one. At the critical level $k = -h^{\vee}$, the Lie subalgebra spanned by $\{L_n| n\geq -1\}$ still acts on $V_{-h^{\vee}}(\gg)$, and the same weight grading exists.

Suppose that $\gg = \C$, regarded as an abelian Lie algebra, and that $B$ is nondegenerate. Then for $k\neq 0$, $V_k(\gg,B)$ is just the Heisenberg vertex algebra $\cH$ with generator $\alpha$ satisfying $\alpha(z) \alpha(w) \sim (z-w)^{-2}$. There is an analogous Virasoro element $L = \frac{1}{2} :\alpha \alpha:$, under which $\alpha$ is primary of weight one. \end{example}

\begin{example}[$\beta\gamma$-systems] Let $V$ be a finite-dimensional complex vector space. The $\beta\gamma$-system or algebra of chiral differential operators $\cS(V)$ was introduced in \cite{FMS}. It has even generators $\beta^{x}$, $\gamma^{x'}$ for $x\in V$, $x'\in V^*$, which satisfy
\begin{equation}\beta^x(z)\gamma^{x'}(w)\sim\langle x',x\rangle (z-w)^{-1}, \qquad \gamma^{x'}(z)\beta^x(w)\sim -\langle x',x\rangle (z-w)^{-1},\end{equation}
$$\beta^x(z)\beta^y(w)\sim 0, \qquad \gamma^{x'}(z)\gamma^{y'}(w)\sim 0.$$ Here $\bra,\ket$ denotes the natural pairing between $V^*$ and $V$. We give $\cS(V)$ the conformal structure \begin{equation} \label{virasorobg} L_{\cS}= \sum_{i=1}^n :\beta^{x_i}\partial\gamma^{x'_i}:,\end{equation} under which $\beta^{x_i}$, $\gamma^{x'_i}$ are primary of weights $1$, $0$, respectively. Here $\{x_1,\dots,x_n\}$ is a basis for $V$ and $\{x'_1,\dots,x'_n\}$ is the dual basis for $V^*$. There is a basis for $\cS(V)$ consisting of iterated Wick products of the generators and their derivatives. There is an additional $\mathbb{Z}$-grading on $\cS(V)$ which we call the $\beta\gamma$-charge. Define \begin{equation}\label{defcharge} e = \sum_{i=1}^n : \beta^{x_i}\gamma^{x'_i}:.\end{equation} The zero mode $e(0)$ acts diagonalizably on $\cS(V)$. The $\beta\gamma$-charge grading is just the eigenspace decomposition of $\cS(V)$ under $e(0)$, and $\beta^{x}$, $\gamma^{x'}$ have $\beta\gamma$-charges $-1$, $1$, respectively. \end{example}

Given a vertex algebra $\cV$ and a subalgebra $\cA$, the commutant $\text{Com}(\cA,\cV)$ was introduced by Frenkel and Zhu in \cite{FZ}, generalizing a previous construction known as the coset construction. 
\begin{defn} Let $\cV$ be a vertex algebra, and let $\cA$ be a subalgebra. The commutant of $\cA$ in $\cV$, denoted by $\text{Com}(\cA,\cV)$, is the subalgebra of vertex operators $v\in\cV$ such that $[a(z),v(w)] = 0$ for all $a\in\cA$. Equivalently, $a\circ_n v = 0$ for all $a\in\cA$ and $n\geq 0$.\end{defn}
We regard $\cV$ as a module over $\cA$ via the left regular action, and we regard $\text{Com}(\cA,\cV)$ as the invariant subalgebra. If $\cA$ is a homomorphic image of $V_k(\gg,B)$, $\text{Com}(\cA,\cV) = \cV^{\gg[t]}$. The double commutant $\text{Com}(\text{Com}(\cA,\cV),\cV)$ always contains $\cA$, and if $\cA = \text{Com}(\text{Com}(\cA,\cV),\cV)$, we say that $\cA$ and $\text{Com}(\cA,\cV)$ form a {\it Howe pair} inside $\cV$. Since $$\text{Com}(\text{Com}(\text{Com}(\cA,\cV),\cV),\cV) = \text{Com}(\cA,\cV),$$ a subalgebra $\cB$ is a member of a Howe pair if and only if $\cB = \text{Com}(\cA,\cV)$ for some $\cA$.

\subsection{The Zhu functor}
Let $\cV$ be a vertex algebra with weight grading $\cV = \bigoplus_{n\in\mathbb{Z}} \cV_n$. The Zhu functor \cite{Zh} attaches to $\cV$ an associative algebra $A(\cV)$, together with a surjective linear map $\pi_{\text{Zhu}}:\cV\ra A(\cV)$. For $a\in \cV_m$, and $b\in\cV$, define $$a*b = \text{Res}_z \bigg (a(z) \frac{(z+1)^{m}}{z}b\bigg),$$ and extend $*$ by linearity to a bilinear operation $\cV\otimes \cV\ra \cV$. Let $O(\cV)$ denote the subspace of $\cV$ spanned by elements of the form $$a\circ b = \text{Res}_z \bigg (a(z) \frac{(z+1)^{m}}{z^2}b\bigg),$$ for $a\in\cV_m$, and let $A(\cV)$ be the quotient $\cV/O(\cV)$, with projection $\pi_{\text{Zhu}}:\cV\ra A(\cV)$. For $a,b\in \cV$, $a\sim b$ means $a-b\in O(\cV)$, and $[a]$ denotes the image of $a$ in $A(\cV)$. 
\begin{thm} (Zhu) $O(\cV)$ is a two-sided ideal in $\cV$ under the product $*$, and $(A(\cV),*)$ is an associative algebra with unit $[1]$. The assignment $\cV\mapsto A(\cV)$ is functorial.\end{thm}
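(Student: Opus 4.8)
The plan is to establish Zhu's theorem by verifying the three claims in sequence: first that $O(\cV)$ is a two-sided ideal under $*$, then that $*$ descends to an associative product on $A(\cV)$ with unit $[1]$, and finally that the construction is functorial. All of these rest on careful manipulation of the residue formulas defining $*$ and $\circ$, together with the commutative quantum operator algebra structure. The essential computational input is the OPE formula \eqref{opeform} and the associativity/commutativity identities that hold in any vertex algebra, which let one re-expand products $a(z)b(w)$ and interchange the roles of $z$ and $w$ at the cost of controlled correction terms.

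First I would prove that $O(\cV)$ is a two-sided ideal, which is the heart of the matter. For the left ideal property, I would take $a\in\cV_m$ and an element $b\circ c\in O(\cV)$ with $b\in\cV_p$, and show $a*(b\circ c)\in O(\cV)$ by writing out the iterated residues $\text{Res}_z\text{Res}_w$ and using the vertex-algebra associativity to recombine $a(z)\big(b(w)\tfrac{(w+1)^p}{w^2}c\big)$ into an expression manifestly lying in $O(\cV)$. The factor $(z+1)^m/z$ from $*$ and the factor $(w+1)^p/w^2$ from $\circ$ combine in such a way that the double pole in $w$ survives, which is precisely what forces the result back into $O(\cV)$. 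The right ideal property, showing $(b\circ c)*a\in O(\cV)$, is handled symmetrically but uses quantum commutativity to move $a$ past $b$; this is where one must track the super sign $(-1)^{|a||b|}$ and the reordering of the formal expansions $\iota_{|z|>|w|}$ versus $\iota_{|w|>|z|}$ carefully. I expect this ideal computation to be the main obstacle, since it requires the precise form of the associativity formula for vertex algebras rather than just the definitions given in the excerpt.

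Next, granting that $O(\cV)$ is an ideal, I would verify associativity of the induced product on $A(\cV)$. The clean way is to show that modulo $O(\cV)$ one has $(a*b)*c\equiv a*(b*c)$; this again reduces to a residue identity where the weight factors $(z+1)^m$ conspire so that the two iterated products differ by an element of $O(\cV)$. The unit axiom $[1]*[a]=[a]=[a]*[1]$ follows quickly from the identities $1\circ_n a=\delta_{n,-1}a$ and $a\circ_n 1=\delta_{n,-1}a$ for $n\geq-1$ stated in the excerpt, since substituting $1$ into the defining residue picks out exactly the $\circ_{-1}$ term.

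Finally, for functoriality, I would observe that any vertex algebra homomorphism $\phi:\cV\to\cW$ preserves all circle products and hence preserves both $*$ and $\circ$; consequently $\phi$ maps $O(\cV)$ into $O(\cW)$ and descends to an algebra homomorphism $A(\phi):A(\cV)\to A(\cW)$ with $A(\phi)\circ\pi_{\text{Zhu}}=\pi_{\text{Zhu}}\circ\phi$. Compatibility with composition and identities is then immediate, establishing that $\cV\mapsto A(\cV)$ is a functor. The surjectivity of $\pi_{\text{Zhu}}$ is automatic since it is the canonical quotient map.
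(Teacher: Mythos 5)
The paper itself does not prove this theorem: it is stated with attribution and quoted from Zhu's paper \cite{Zh}, so there is no internal argument to compare yours against. Your outline does follow the strategy of Zhu's original proof (ideal property first, then associativity, unit, functoriality), and your treatment of the unit axiom is essentially complete. For functoriality, one correction: preserving circle products is \emph{not} by itself enough, because $*$ and $\circ$ involve the weight-dependent factors $(z+1)^m$; the morphisms must also preserve the weight grading, so the functor lives on the category of graded vertex algebras with grading-preserving homomorphisms.

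The real issue is that the heart of the proposal --- the two-sided ideal property and associativity modulo $O(\cV)$ --- is deferred rather than proved, and the missing step is not routine. When you recombine the double residue for $a*(b\circ c)$ via associativity/the Jacobi identity, the binomial re-expansions such as $(1+z)^m = ((1+w)+(z-w))^m$ produce terms with poles $w^{-2-n}$ of arbitrarily high order $n\geq 0$, not just the double pole you say ``survives.'' To absorb all of these back into $O(\cV)$ one needs the strengthening of the definition of $O(\cV)$ that is the actual engine of Zhu's proof: for $a\in\cV_m$, $b\in\cV$, and every $n\geq 0$,
$$\text{Res}_z \bigg(a(z)\frac{(z+1)^{m}}{z^{2+n}}b\bigg)\in O(\cV),$$
proved by induction on $n$ by integrating by parts against translation covariance, $(\partial a)(z)=\frac{d}{dz}a(z)$, using that $\partial a$ has weight $m+1$. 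Without identifying this lemma the recombination step cannot close up, and the same lemma is what makes $(a*b)*c\equiv a*(b*c)$ and the right-ideal computation (where you also need the commutativity statement $a*b-b*a\equiv \text{Res}_z\big(a(z)(1+z)^{m-1}b\big)$ modulo $O(\cV)$) go through. So the architecture of your plan is correct, but it remains a plan rather than a proof precisely at the point you yourself flag as the main obstacle.
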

Let $\mathcal{V}$ be a vertex algebra which is strongly generated by a set of weight-homogeneous elements $\alpha_i$ of weights $w_i$, for $i$ in some index set $I$. Then $A(\mathcal{V})$ is generated by $\{ a_i = \pi_{\text{Zhu}}(\alpha_i(z))| i\in I\}$. The main application of the Zhu functor is to study the representation theory of $\cV$. A $\mathbb{Z}_{\geq 0}$-graded module $M = \bigoplus_{n\geq 0} M_n$ over $\cV$ is called {\it admissible} if for every $a\in\cV_m$, $a(n) M_k \subset M_{m+k -n-1}$, for all $n\in\mathbb{Z}$. Given $a\in\cV_m$, the Fourier mode $a(m-1)$ acts on each $M_k$. The subspace $M_0$ is then a module over $A(\cV)$ with action $[a]\mapsto a(m-1) \in \text{End}(M_0)$. In fact, $M\mapsto M_0$ provides a bijection between irreducible, admissible $\cV$-modules and irreducible $A(\cV)$-modules.

The Zhu functor and the commutant construction interact in the following way: for any subalgebra $\cA\subset \cV$, we have a commutative diagram 
\begin{equation}\label{cdgencase} \begin{array}[c]{ccc}
\text{Com}(\cA,\cV)&\stackrel{}{\hookrightarrow}& \cV  \\
\downarrow\scriptstyle{\pi}&&\downarrow\scriptstyle{\pi_{\text{Zhu}}}\\
\text{Com}(A,A(\cV))&\stackrel{}{\hookrightarrow}& A(\cV)
\end{array} .\end{equation} 
Here $A=\pi_{\text{Zhu}}(\cA)\subset A(\cV)$, and $\text{Com}(A,A(\cV))$ is the ordinary commutant in the theory of associative algebras. The horizontal maps are inclusions, and $\pi$ is the restriction of $\pi_{\text{Zhu}}$ to $\text{Com}(\cA,\cV)$. In general, the map $\pi$ need not be surjective and $A(\text{Com}(\cA,\cV))$ need not coincide with $\text{Com}(A,A(\cV))$. However, both these statements are true in the main examples in this paper.

\subsection{Invariant chiral differential operators}
The main examples of commutants that we consider are analogous to classical rings of invariant differential operators. Let $V=\mathbb{C}^n$, and fix a basis $\{x_1,\dots,x_n\}$ for $V$, with dual basis $\{x'_1,\dots,x'_n\}$ for $V^*$. The {\it Weyl algebra} $\cD(V)$ is generated by $x_i', \frac{\partial}{\partial x'_i}$, which satisfy $[\frac{\partial}{\partial x'_i},x'_j] = \delta_{i,j}$. Equip $\cD(V)$ with the Bernstein filtration \begin{equation}\label{bernstein} \cD(V)_{(0)}\subset \cD(V)_{(1)}\subset  \cdots,\end{equation} defined by $(x'_1)^{k_1} \cdots (x'_n)^{k_n} (\frac{\partial}{\partial x'_1})^{l_1}\cdots (\frac{\partial}{\partial x'_n})^{l_n} \in \cD(V)_{(r)}$ if $k_1 + \cdots +k_n + l_1 + \cdots +l_n \leq r$. Given $\omega\in \cD(V)_{(r)}$ and $\nu\in\cD(V)_{(s)}$, $[\omega,\nu]\in \cD(V)_{(r+s-2)}$, so that \begin{equation}\label{isodi} \text{gr}(\cD(V)) = \bigoplus_{r\geq 0} \cD(V)_{(r)} / \cD(V)_{(r-1)} \cong \text{Sym}(V\oplus V^*),\end{equation} where $\cD(V)_{(-1)} = \{0\}$. We say that $\text{deg}(\alpha) = d$ if $\alpha\in\cD(V)_{(d)}$ and $\alpha\notin \cD(V)_{(d-1)}$. 

Let $G$ be a connected, reductive complex algebraic group with Lie algebra $\gg$, and let $V$ be a $G$-module. Then $G$ acts on $\cD(V)$ and preserves the filtration. The induced action $\rho: \gg \ra \text{Der}(\cD(V))$ can be realized by {\it inner} derivations: there is a Lie algebra homomorphism \begin{equation}\label{defoftau} \tau:\gg\ra \cD(V), \qquad \xi\mapsto - \sum_{i=1}^n x'_i \rho(\xi)\big(\frac{\partial}{\partial x'_i}\big).\end{equation} Here $\rho(\xi)$ acts on the span of the operators $\frac{\partial}{\partial x'_i}$, regarded as a copy of $V$. Given $\xi\in\gg$, $\tau(\xi)$ is just the vector field on $V$ generated by $\xi$, and $\xi$ acts on $\cD(V)$ by $[\tau(\xi),-]$. We can extend $\tau$ to a map $U(\gg)\ra \cD(V)$. Since $G$ is connected, $\cD(V)^G=\text{Com}(\tau(U(\gg)),\cD(V))$. Finally, (\ref{bernstein}) restricts to a filtration
$\cD(V)^{G}_{(0)}\subset \cD(V)^{G}_{(1)} \subset \cdots$ on $\cD(V)^{G}$, and $\text{gr}(\cD(V)^{G}) \cong \text{gr}(\cD(V))^{G} \cong \text{Sym}(V\oplus V^*)^{G}$. 

It is well known that $\cD(V)$ is the Zhu algebra of the $\beta\gamma$-system $\cS(V)$, and that $U(\gg)$ is the Zhu algebra of the affine vertex algebra $V_k(\gg,B)$. The map $\rho$ induces a vertex algebra homomorphism $\hat{\tau}: V_{-1}(\gg,B)\ra \cS(V)$, which is analogous to (\ref{defoftau}), given by
\begin{equation} \label{deftheta} \hat{\tau}(X^{\xi}) = \theta^{\xi}_{\cS} = - \sum_{i=1}^n :\gamma^{x'_i}\beta^{\rho(\xi)(x_i)}:,\end{equation} where $B$ is the bilinear form $B(\xi,\eta) = \text{Tr}(\rho(\xi)\rho(\eta))$. We have a commutative diagram 
\begin{equation}\label{commdiagfirst} \begin{array}[c]{ccc}
V_{-1}(\gg,B) &\stackrel{}{\rightarrow}& \cS(V)  \\
\downarrow\scriptstyle{\pi_{\text{Zhu}} }&&\downarrow\scriptstyle{\pi_{\text{Zhu}}}\\
U(\gg) &\stackrel{}{\rightarrow}& \cD(V)
\end{array}.\end{equation} The top horizontal map is $\hat{\tau}$, and the bottom map coincides with $\tau$ up to modification by a scalar (i.e., an element of degree zero in $\cD(V)$). Let $\Theta$ denote the subalgebra $\hat{\tau}(V_{-1}(\gg,B))\subset \cS(V)$. The commutant $\text{Com}(\Theta,\cS(V))= \cS(V)^{\gg[t]}$ will be called the algebra of {\it invariant chiral differential operators} on $V$. By (\ref{cdgencase}), there is another commutative diagram
\begin{equation}\label{commdiag} \begin{array}[c]{ccc}
\cS(V)^{\gg[t]}&\stackrel{}{\rightarrow}& \cS(V)  \\
\downarrow\scriptstyle{\pi}&&\downarrow\scriptstyle{\pi_{\text{Zhu}}}\\
\cD(V)^{G} &\stackrel{}{\rightarrow}& \cD(V)
\end{array}.\end{equation}

\section{Graded and filtered structures}
Let $\cR$ be the category of vertex algebras $\cA$ equipped with a $\mathbb{Z}_{\geq 0}$-filtration
\begin{equation} \cA_{(0)}\subset\cA_{(1)}\subset\cA_{(2)}\subset \cdots, \qquad \cA = \bigcup_{k\geq 0}
\cA_{(k)}\end{equation} such that $\cA_{(0)} = \mathbb{C}$, and for all
$a\in \cA_{(k)}$, $b\in\cA_{(l)}$, we have
\begin{equation} \label{goodi} a\circ_n b\in\cA_{(k+l)}, \qquad \text{for}\
n<0,\end{equation}
\begin{equation} \label{goodii} a\circ_n b\in\cA_{(k+l-1)}, \qquad \text{for}\
n\geq 0.\end{equation}
Elements $a(z)\in\cA_{(d)}\setminus \cA_{(d-1)}$ are said to have degree $d$.

Filtrations on vertex algebras satisfying (\ref{goodi})-(\ref{goodii}) were introduced in \cite{LiII}, and are known as {\it good increasing filtrations}. Setting $\cA_{(-1)} = \{0\}$, the associated graded object $\text{gr}(\cA) = \bigoplus_{k\geq 0}\cA_{(k)}/\cA_{(k-1)}$ is a
$\mathbb{Z}_{\geq 0}$-graded associative, supercommutative algebra with a
unit $1$ under a product induced by the Wick product on $\cA$. For each $r\geq 1$ we have the projection \begin{equation} \label{projmap}\phi_r: \cA_{(r)} \ra \cA_{(r)}/\cA_{(r-1)}\subset \text{gr}(\cA).\end{equation} 
Moreover, $\text{gr}(\cA)$ has a derivation $\partial$ of degree zero
(induced by the operator $\partial = \frac{d}{dz}$ on $\cA$), and
for each $a\in\cA_{(d)}$ and $n\geq 0$, the operator $a\circ_n$ on $\cA$
induces a derivation of degree $d-k$ on $\text{gr}(\cA)$, which we denote by $a(n)$. Here $$k  = \text{sup} \{ j\geq 1|~ \cA_{(r)}\circ_n \cA_{(s)}\subset \cA_{(r+s-j)}~\forall r,s,n\geq 0\},$$ as in \cite{LL}. Finally, these derivations give $\text{gr}(\cA)$ the structure of a vertex Poisson algebra.

The assignment $\cA\mapsto \text{gr}(\cA)$ is a functor from $\cR$ to the category of $\mathbb{Z}_{\geq 0}$-graded supercommutative rings with a differential $\partial$ of degree 0, which we will call $\partial$-rings. A $\partial$-ring is just an abelian vertex algebra, that is, a vertex algebra $\cV$ in which $[a(z),b(w)] = 0$ for all $a,b\in\cV$. A $\partial$-ring $A$ is said to be generated by a subset $\{a_i| i\in I\}$ if $\{\partial^k a_i| i\in I, k\geq 0\}$ generates $A$ as a graded ring. The key feature of $\cR$ is the following reconstruction property \cite{LL}.

\begin{lemma}\label{reconlem}Let $\cA$ be a vertex algebra in $\cR$ and let $\{a_i| i\in I\}$ be a set of generators for $\text{gr}(\cA)$ as a $\partial$-ring, where $a_i$ is homogeneous of degree $d_i$. If $a_i(z)\in\cA_{(d_i)}$ are vertex operators such that $\phi_{d_i}(a_i(z)) = a_i$, then $\cA$ is strongly generated as a vertex algebra by $\{a_i(z)| i\in I\}$.\end{lemma}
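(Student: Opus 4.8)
The plan is to prove the statement by induction on the filtration degree. Let $\cB\subseteq\cA$ be the linear span of the unit $1$ together with all iterated Wick products $:\partial^{k_1} a_{i_1}(z)\cdots \partial^{k_m}a_{i_m}(z):$ with $i_1,\dots,i_m\in I$ and $k_1,\dots,k_m\geq 0$; showing $\cB=\cA$ is exactly the assertion that $\{a_i(z)\}$ strongly generates $\cA$. Since $\cA=\bigcup_{k\geq 0}\cA_{(k)}$, it suffices to prove $\cA_{(d)}\subseteq\cB$ for every $d\geq 0$, and the base case $\cA_{(0)}=\mathbb{C}\cdot 1\subseteq\cB$ is immediate.

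First I would record two compatibility facts supplied by the good-filtration axioms. Because $\partial a=a\circ_{-2}1$ with $1\in\cA_{(0)}$, the operator $\partial$ preserves each $\cA_{(r)}$ and induces on $\text{gr}(\cA)$ precisely its $\partial$-ring differential; in particular $\partial^k a_i(z)\in\cA_{(d_i)}$ and $\phi_{d_i}(\partial^k a_i(z))=\partial^k a_i$. Second, since the Wick product $\circ_{-1}$ induces the associative, supercommutative multiplication on $\text{gr}(\cA)$, an iterated Wick product of homogeneous vertex operators of degrees $\delta_1,\dots,\delta_m$ lies in $\cA_{(\delta_1+\cdots+\delta_m)}$ by \eqref{goodi}, and its image under $\phi_{\delta_1+\cdots+\delta_m}$ is the corresponding product in $\text{gr}(\cA)$.

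For the inductive step I would assume $\cA_{(d-1)}\subseteq\cB$ and take $a\in\cA_{(d)}$. Its symbol $\phi_d(a)$ is homogeneous of degree $d$ in $\text{gr}(\cA)$, so by hypothesis it is a polynomial in the $\partial^k a_i$; projecting onto the degree-$d$ component of the graded ring, every surviving monomial is a product $\prod_j \partial^{k_j}a_{i_j}$ with $\sum_j d_{i_j}=d$, because $\partial$ has degree zero and each $a_i$ is homogeneous of degree $d_i$. Lifting each such monomial to the iterated Wick product $:\prod_j \partial^{k_j}a_{i_j}(z):$ and assembling the same linear combination yields an element $\tilde a\in\cB\cap\cA_{(d)}$ with $\phi_d(\tilde a)=\phi_d(a)$ by the two facts above. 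Then $a-\tilde a\in\cA_{(d-1)}\subseteq\cB$, so $a=(a-\tilde a)+\tilde a\in\cB$, closing the induction.

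The one point I would treat with care is the second compatibility fact, namely that the right-nested iterated Wick product projects to the honest associative product in $\text{gr}(\cA)$ regardless of nesting or ordering. This is where \eqref{goodi}--\eqref{goodii} do the real work: any discrepancy between two nestings, or between $:ab:$ and $\pm:ba:$, is expressed through the nonnegative circle products, which by \eqref{goodii} strictly lower the filtration degree and hence vanish in the top graded piece. Thus the nonassociativity and noncommutativity of the Wick product on $\cA$ are invisible in $\text{gr}(\cA)$, and the remainder of the argument is bookkeeping on the grading.
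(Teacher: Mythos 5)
Your proof is correct: the induction on filtration degree---lifting the degree-$d$ symbol of an element to an iterated Wick product of the $\partial^{k}a_i(z)$, subtracting, and landing in $\cA_{(d-1)}$---is exactly the standard reconstruction argument, and your handling of the nesting/ordering issue via \eqref{goodii} is the right justification that the symbol map is multiplicative. The paper itself gives no proof of Lemma \ref{reconlem} but defers to \cite{LL}, where the argument runs along essentially these same lines, so your proposal matches the intended proof.
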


The filtration can also be used to study normally ordered polynomial relations among the generators of a vertex algebra in $\cR$. Let $\cA$, $\{a_i| i\in I\}$, and $a_i(z)\in\cA_{(d_i)}$ be as above. Given a homogeneous polynomial $p\in \text{gr}(\cA)$ of degree $d$, a {\it normal ordering} of $p$ will be a choice of normally ordered polynomial $p(z)\in \cA_{(d)}$, obtained by replacing each $a_i$ by $a_i(z)$, and replacing ordinary products with iterated Wick products. Of course $p(z)$ is not unique, but for any choice of $p(z)$ we have $\phi_{d}(p(z)) = p$.

Suppose that $p$ is a relation among the $a_i$ and their derivatives, which we assume to be homogeneous of degree $d$. Let $p^d(z) \in \cA$ be some normal ordering of $p$. Since $p$ is identically zero, $p^d(z)\in\cA_{(d-1)}$. The polynomial $\phi_{d-1} (p^d(z)) \in \text{gr}(\cA)$ is homogeneous of degree $d-1$; if it is nonzero, it can be expressed as a polynomial in the $a_i$'s and their derivatives. Choose some normal ordering of this polynomial, and call it $-p^{d-1}(z)$. Then $p^{d} (z)+ p^{d-1}(z)$ has the property that $$ \phi_d (p^d (z)+ p^{d-1}(z)) = p, \qquad  p^d(z) + p^{d-1}(z) \in \cA_{(d-2)}.$$ Continuing this process, we arrive at a vertex operator $p(z)=\sum_{k=1}^{d} p^{k}(z) \in \cA$, which is identically zero. We view $p(z)$ as a quantum correction of the relation $p$.

Let $\cI$ denote the ideal of relations among $\{\partial^k a_i| i\in I, k\geq 0\}$, which is clearly a $\partial$-ideal, i.e., it is closed under $\partial$. A set $\{r_j| j\in J\}$ such that $r_j$ is homogeneous of degree $e_j$ is said to generate $\cI$ as a $\partial$-ideal if $\{\partial^k r_j| j\in J,\ k\geq 0\}$ generates $\cI$ as an ideal. Let $r_j(z)$ be the relation in $\cA$ of degree $e_j$ which is a quantum correction of $r_j$, as above.

\begin{lemma} \label{idealrecon} All normally ordered polynomial relations among the $a_i(z)$'s and their derivatives are consequences of $\{r_j(z)|j\in J\}$ in the sense that they can be written as linear combinations of vertex operators of the form $:\alpha(z) \partial^k r_j(z):$ for $\alpha \in \cA$ and $k\geq 0$.\end{lemma}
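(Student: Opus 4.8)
The plan is to lift the classical relation-solving procedure from $\gr(\cA)$ to $\cA$ itself, using the filtration degree as an induction parameter and exploiting that quantum corrections only lower degree. Suppose $p(z)$ is a normally ordered polynomial relation among the $a_i(z)$ and their derivatives, homogeneous of degree $d$; that is, $\phi_d(p(z))$ is a homogeneous polynomial $\bar p$ of degree $d$ in the $a_i$ and $\partial^k a_i$, and $p(z)\in\cA_{(d-1)}$ because $p(z)$ is identically zero. First I would observe that $\bar p$ lies in the ideal $\cI$ of relations, since it vanishes in $\gr(\cA)$. By hypothesis $\{r_j\}$ generates $\cI$ as a $\partial$-ideal, so we may write $\bar p = \sum_j \beta_j\, \partial^{k_j} r_j$ for suitable homogeneous $\beta_j\in\gr(\cA)$ and $k_j\geq 0$, where the degrees match up.

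The key step is then to form the corresponding vertex operator expression. For each term choose a normal ordering $\beta_j(z)$ of $\beta_j$ and consider $q(z) = \sum_j :\beta_j(z)\,\partial^{k_j} r_j(z):$, where $r_j(z)$ is the quantum-corrected relation associated to $r_j$ as described before the statement. By construction $\phi_d$ applied to $:\beta_j(z)\,\partial^{k_j} r_j(z):$ recovers $\beta_j\,\partial^{k_j} r_j$ at top degree, using that $\partial$ on $\cA$ induces $\partial$ on $\gr(\cA)$ and that Wick products induce ordinary products on the associated graded; but since each $r_j(z)$ is \emph{identically zero} as a relation, each summand $:\beta_j(z)\,\partial^{k_j} r_j(z):$ is itself identically zero, so $q(z)=0$ as a vertex operator. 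The point is that $q(z)$ is already of the exact form claimed in the statement, with $\alpha(z)=\beta_j(z)$.

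It remains to compare $p(z)$ with the tautologically-zero expression $q(z)$. The difference $p(z) - q(z)$ is again a normally ordered polynomial relation, but now I would argue it has strictly smaller filtration degree: both $p(z)$ and the $d$-degree part of $q(z)$ project under $\phi_d$ to $\bar p$, so $p(z)-q(z)\in\cA_{(d-1)}$. Here one must be careful, because $q(z)$ was built from the quantum-corrected $r_j(z)$ and the chosen normal orderings $\beta_j(z)$, and the lower-degree terms of $:\beta_j(z)\,\partial^{k_j}r_j(z):$ arising from reordering and from the tails $r_j^{k}(z)$ of the quantum correction are genuine normally ordered polynomials in the $a_i(z)$; thus $p(z)-q(z)$ is still a normally ordered polynomial relation to which the inductive hypothesis applies. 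This gives a descending induction on $d$, with base case $d=0$ trivial since $\cA_{(0)}=\C$ contains no nontrivial relations.

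The main obstacle I anticipate is the bookkeeping in the inductive step: verifying that $p(z)-q(z)$, after expanding the iterated Wick products and collecting the various lower-order quantum-correction tails, is genuinely expressible as a normally ordered polynomial in the generators (so that the induction hypothesis is applicable) rather than some uncontrolled element of $\cA_{(d-1)}$. This rests on the non-commutative, non-associative Wick calculus identities (associativity of the Wick product modulo lower-degree corrections, and the commutator formula controlling $\circ_n$ for $n\geq0$), which guarantee that reordering only produces terms of strictly lower filtration degree that are again polynomial in the $a_i(z)$ and their derivatives. Once that closure under the filtration is in hand, the induction and the final assembly into the form $\sum :\alpha(z)\,\partial^k r_j(z):$ are routine.
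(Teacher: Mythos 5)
Your proposal is correct and takes essentially the same route as the paper's proof: induct on the filtration degree, write the leading classical relation as $\sum_j \beta_j\,\partial^{k_j} r_j$ using the $\partial$-ideal generators, lift it to the identically-zero vertex operator $\sum_j :\beta_j(z)\,\partial^{k_j} r_j(z):$ of the desired form, and subtract to obtain a relation of strictly lower degree. The closure issue you flag is handled implicitly in the paper by choosing the lifts $\beta_j(z)$ to be normal orderings of the classical coefficients, so the difference is again a normally ordered polynomial relation and the induction goes through.
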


\begin{proof} 
Let $\omega(z)$ be a normally ordered relation of degree $d$, meaning that $d$ is the maximal degree of monomials appearing in $\omega(z)$. Then $\phi_d(\omega(z)) = 0$, and can be expressed in the form $$\sum_{j\in J}\sum_{k \geq 0} \alpha_{jk} \partial^k r_j,$$ where all but finitely many $\alpha_{jk}$ are zero, and each $\alpha_{jk}$ is homogeneous of degree $d-e_j$. Choose vertex operators $\alpha_{jk}(z)\in \cA_{(d-e_j)}$ such that $\phi_{d-e_j}(\alpha_{jk}(z)) = \alpha_{jk}$, and let $$\omega'(z) = \sum_{j\in J}\sum_{k \geq 0} :\alpha_{jk}(z) \partial^k r_j(z):.$$ Since each $r_j(z)$ is identically zero, $\omega''(z) = \omega(z) - \omega'(z)$ is also identically zero. Since $\omega'(z)$ has the desired form and $\omega''(z)$ is a relation of degree at most $d-1$, the claim follows by induction on $d$. \end{proof}

\subsection{A good increasing filtration on $\cS(V)$} Define $\cS(V)_{(r)}$ to be the linear span of the normally ordered monomials \begin{equation} \label{goodsv} \{:\partial^{k_1} \beta^{x_1} \cdots \partial^{k_s}\beta^{x_s}\partial^{l_1} \gamma^{y'_1}\cdots \partial^{l_t}\gamma^{y'_t}:|\ x_i\in V,\ y'_i\in V^*,\  k_i,l_i\geq 0,\  s+t \leq r\}.\end{equation} This is a good increasing filtration. We have $\cS(V) \cong \text{gr}(\cS(V))$ as linear spaces, and \begin{equation} \label{structureofgrs} \text{gr}(\cS(V))\cong \text{Sym} (\bigoplus_{k\geq 0} (V_k\oplus V^*_k)), \qquad V_k = \{\beta^{x}_k |\ x\in V\}, \qquad V^*_k = \{\gamma^{x'}_k |\  x'\in V^*\},\end{equation} as commutative algebras. Here $\beta^{x}_k$ and $\gamma^{x'}_k$ are the images of $\partial^k \beta^{x}$ and $\partial^k\gamma^{x'}$ in $\text{gr}(\cS(V))$ under the projection $\phi_1$ given by \eqref{projmap}. The map \eqref{deftheta} induces an action of $\gg[t]$ on $\text{gr}(\cS(V))$ by derivations of degree zero, defined on generators by 
\begin{equation}\label{actiontheta}\xi t^r(\beta^x_i) = \lambda^r_i \beta^{\rho(\xi)(x)}_{i-r}, \qquad \xi t^r(\gamma^{x'}_i) = \lambda^r_i \gamma^{\rho^*(\xi)(x')}_{i-r}, \qquad \lambda^r_i = \bigg\{ \begin{matrix} \frac{i!}{(i-r)!}  & 0\leq r\leq i \cr 0 & r>i \end{matrix}.\end{equation}
The derivation $\partial$ on $\text{gr}(\cS(V))$ is given by \begin{equation}\label{actionofpartial}\partial \beta^{x}_i = \beta^x_{i+1}, \qquad \partial \gamma^{x'}_i = \gamma^{x'}_{i+1},\end{equation} and since $[\partial, \xi t^r] = - r \xi t^{r-1}$, $\text{gr}(\cS(V))^{\gg[t]}$ is a closed under $\partial$. Finally, there is an injective map of $\partial$-rings \begin{equation}\label{injgamm} \text{gr}(\cS(V)^{\gg[t]})\hookrightarrow \text{gr}(\cS(V))^{\gg[t]},\end{equation} which is in general not surjective. Let $R$ be the image \eqref{injgamm}, and let $\{a_i| i\in I\}$ be a collection of generators for $R$ as a $\partial$-ring. By Lemma \ref{reconlem}, any set $\{a_i(z)\in \cS(V)^{\gg[t]}| i\in I\}$ such that $d_i = \text{deg}(a_i)$ and $\phi_{d_i}(a_i(z)) = a_i$, is a strong generating set for $\cS(V)^{\gg[t]}$.

\subsection{Arc spaces}
A connection between vertex algebras and arc spaces was observed in \cite{FBZ}, where the authors pointed out that for any affine variety $X$, the ring of polynomial functions on the arc space $X_{\infty}$ has the structure of an abelian vertex algebra. Conversely, the $\partial$-ring $\text{gr}(\cA)$ of a vertex algebra $\cA\in \cR$ can often be realized as the ring of polynomial functions $\cO(X_{\infty})$ for some $X$. For example, $\text{gr}(\cS(V))\cong \cO((V\oplus V^*)_{\infty})$. In our main examples, $\text{gr}(\cS(V)^{\gg[t]})\cong \cO(X_{\infty})$ where $X = \quot{(V\oplus V^*)}G = \text{Spec}(\cO(V\oplus V^*)^G)$ is the categorical quotient. More generally, whenever $\text{Spec}(\text{gr}(\cA))\cong X_{\infty}$ for some $X$, the geometry of $X_{\infty}$ encodes information about the vertex algebra structure of $\cA$.

We recall some basic facts about arc spaces, following the notation in \cite{EM}. Let $X$ be an irreducible scheme of finite type over $\mathbb{C}$. For each integer $m\geq 0$, the jet scheme $X_m$ is determined by its functor of points: for every $\mathbb{C}$-algebra $A$, we have a bijection
$$\text{Hom} (\text{Spec}  (A), X_m) \cong \text{Hom} (\text{Spec}  (A[t]/\langle t^{m+1}\rangle ), X).$$ Thus the $\mathbb{C}$-valued points of $X_m$ correspond to the $\mathbb{C}[t]/\langle t^{m+1}\rangle$-valued points of $X$. If $p>m$, we have projections $\pi_{p,m}: X_p \rightarrow X_m$ and $\pi_{p,m} \circ \pi_{q,p} = \pi_{q,m}$  when $q>p>m$. Clearly $X_0 = X$ and $X_1$ is the total tangent space $\text{Spec}(\text{Sym}(\Omega_{X/\mathbb{C}}))$. The assignment $X\mapsto X_m$ is functorial, and a morphism $f:X\ra Y$ induces $f_m: X_m \ra Y_m$ for all $m\geq 1$. If $X$ is nonsingular, $X_m$ is irreducible and nonsingular for all $m$.

If $X=\text{Spec}(R)$ where $R= \mathbb{C}[y_1,\dots,y_r] / \langle f_1,\dots, f_k\rangle$, we can find explicit equations for $X_m$. Define new variables $y_j^{(i)}$ for $i=0,\dots, m$, and define a derivation $D$ by $D(y_j^{(i)}) = y_j^{(i+1)}$ for $i<m$, and $D(y_j^{(m)}) =0$. This specifies the action of $D$ on all of $\mathbb{C}[y_1^{(0)},\dots, y_r^{(m)}]$; in particular, $f_{\ell}^{(i)} = D^i ( f_{\ell})$ is a well-defined polynomial in  $\mathbb{C}[y_1^{(0)},\dots, y_r^{(m)}]$. Letting $R_m = \mathbb{C}[y_1^{(0)},\dots, y_r^{(m)}] / \langle f_1^{(0)},\dots, f_k^{(m)}\rangle$, we have $X_m \cong \text{Spec} (R_m)$. By identifying $y_j$ with $y_j^{(0)}$, we see that $R$ is naturally a subalgebra of $R_m$. There is a $\mathbb{Z}_{\geq 0}$-grading $R_m = \bigoplus_{n\geq 0} R_m[n]$ by weight, defined by $\text{wt}(y^{(i)}_j) = i$. For all $m$, $R_m[0] = R$ and $R_m[n]$ is an $R$-module.

The {\it arc space} of $X$ is defined to be $$X_{\infty} = \lim_{\leftarrow} X_m.$$ If $X = \text{Spec}(R)$ as above, $X_{\infty}=\text{Spec}(R_{\infty})$ where $$R_{\infty}  = \mathbb{C}[y_1^{(0)},\dots,y_j^{(i)},\dots] / \bra f_1^{(0)},\dots, f_\ell^{(i)},\dots\ket.$$ Here $i=0,1,2,\dots$ and $D (y^{(i)}_j) = y^{(i+1)}_j$ for all $i$. By a theorem of Kolchin \cite{Kol}, $X_{\infty}$ is irreducible whenever $X$ is irreducible.

Let $G$ be a connected, reductive complex algebraic group with Lie algebra $\gg$. For $m\geq 1$, $G_m$ is an algebraic group which is the semidirect product of $G$ with a unipotent group $U_m$. The Lie algebra of $G_m$ is $\gg[t]/t^{m+1}$. Given a $G$-module $V$, there is an action of $G$ on $\cO(V)$ by automorphisms, and a compatible action of $\gg$ on $\cO(V)$ by derivations, satisfying $$\frac{d}{dt} \text{exp} (t\xi) (f)|_{t=0} = \xi(f), \qquad \xi\in\gg, \qquad f\in \cO(V).$$ Choose a basis $\{x_1,\dots,x_n\}$ for $V^*$, so that $$\cO(V) \cong  \mathbb{C}[x_1,\dots,x_n], \qquad \cO(V_m) =  \mathbb{C}[x_1^{(i)},\dots,x_n^{(i)}],\qquad 0\leq i\leq m.$$ Then $G_m$ acts on $V_m$, and the induced action of $\gg[t]/t^{m+1}$ on $\cO(V_m)$ is defined as follows. For $\xi \in\gg$, \begin{equation}\label{jetaction}\xi t^r (x_j^{(i)}) = \lambda^r_i (\xi(x_j))^{(i-r)}, \qquad \lambda^r_i = \bigg\{ \begin{matrix} \frac{i!}{(i-r)!}  & 0\leq r\leq i \cr 0 & r>i \end{matrix}.\end{equation} Via the projection $\gg[t]\ra \gg[t]/t^{m+1}$, $\gg[t]$ acts on $\cO(V_m)$. The invariant rings $\cO(V_m)^{\gg[t]}$ and $\cO(V_m)^{\gg[t]/t^{m+1}}$ coincide, and are equal to $\cO(V_m)^{G_m}$ since $G$ is connected.

In general, it is a very subtle problem to find generators for rings of the form $\cO(V_{m})^{G_{m}}$. The map $p:V\ra V /\!\!/ G$ induces a map \begin{equation} \label{arcmapm} p^*_m: \cO((\quot{V} G)_{m}) \rightarrow \cO(V_{m})^{G_{m}},\end{equation} which is neither surjective nor injective in general. Special cases of these rings were studied in \cite{E} and in the appendix of \cite{Mu}, and a more systematic study was carried out in \cite{LSS}. We call a $G$-module $V$ {\it coregular} if $V/\!\!/G$ is smooth; equivalently, $\cO(V)^G$ is a polynomial ring. We impose a mild technical condition which is automatic if $G$ is semisimple; we assume that $\cO(V)$ contains no nontrivial one-dimensional invariant subspaces. Equivalently, every semi-invariant of $G$ is invariant. The following result appears as Corollary 3.20 of \cite{LSS}.
 
\begin{thm} \label{jetthm} Let $G$ be a connected, reductive group, and let $V$ be a coregular $G$-module such that $\cO(V)$ contains no nontrivial one-dimensional $G$-invariant subspaces. Then \eqref{arcmapm} is an isomorphism for all $m$.
\end{thm}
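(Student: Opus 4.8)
The plan is to identify the image of $p^*_m$ with $\cO(V_m)^{G_m}$ by a normality argument on the scheme $X_m = (\quot V G)_m$, which is smooth (hence normal and irreducible) because $V$ is coregular, so that $X = \quot V G \cong \C^r$. First I record two easy points. Since $G$ acts trivially on $X$, the group $G_m$ acts trivially on $X_m$ and $p_m\colon V_m\to X_m$ is $G_m$-invariant; thus $\im(p^*_m)\subseteq \cO(V_m)^{G_m}$, and it remains to prove the reverse inclusion and injectivity. Injectivity of the map \eqref{arcmapm} is equivalent to dominance of $p_m$: as $p$ is dominant and we are in characteristic zero, $p$ is smooth over a dense open subset of $X$, jets lift along a smooth morphism, so $p_m$ is dominant; since $X_m$ is integral, $p^*_m$ is injective and $\im(p^*_m)\cong \cO(X_m)$.

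For the reverse inclusion, take $h\in\cO(V_m)^{G_m}$. Because $X_m$ is normal, it suffices to show that, once $h$ is recognized as pulled back from a rational function on $X_m$, it has no pole along any prime divisor; then $h$ is regular on $X_m$, i.e.\ $h\in\cO(X_m)=\im(p^*_m)$. Two things must be established: (A) that $\operatorname{Frac}(\cO(V_m)^{G_m})=\operatorname{Frac}(\cO(X_m))$, so that $h$ is pulled back from a rational function at all; and (B) that the image of $p_m$ meets every prime divisor of $X_m$, so that a pole of $h$ on $X_m$ would pull back to a genuine pole of the regular function $h$ on $V_m$, a contradiction.

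Claim (A) I would obtain from the principal stratum. Let $V^{\mathrm{pr}}\subseteq V$ be the open set of points with principal isotropy group $H$; by Luna's theory $p$ restricts to a fiber bundle $V^{\mathrm{pr}}\to X^{\mathrm{pr}}$ with fiber the single orbit $G/H$, and $X^{\mathrm{pr}}$ is dense in $X$, so $(X^{\mathrm{pr}})_m=\pi_{m,0}^{-1}(X^{\mathrm{pr}})$ is dense in the irreducible scheme $X_m$. Applying the jet functor, $(V^{\mathrm{pr}})_m\to(X^{\mathrm{pr}})_m$ is, \'etale-locally, the projection off $(G/H)_m$, a single $G_m$-orbit. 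Hence the generic fiber of $p_m$ is a single $G_m$-orbit, and Rosenlicht's theorem gives $\C(V_m)^{G_m}=\C(X_m)$, which yields (A).

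The hard part is (B), and this is where coregularity and the absence of nontrivial one-dimensional invariant subspaces enter. It suffices to show $p_m$ is surjective in codimension one, which I would reduce to a statement about $p$ alone: let $Z\subseteq X$ be the locus of points $x$ whose entire fiber $p^{-1}(x)$ lies in the critical locus $\{\,dp\ \text{not surjective}\,\}$. For $x\notin Z$, choose a smooth point $v$ of $p$ over $x$; formal smoothness of $p$ at $v$ lifts any jet based at $x$ through $v$, so $\im(p_m)\supseteq X_m\setminus\pi_{m,0}^{-1}(Z)$. Since $\pi_{m,0}$ is an affine bundle, $\operatorname{codim}_{X_m}\pi_{m,0}^{-1}(Z)=\operatorname{codim}_X Z$, so (B) follows once $\operatorname{codim}_X Z\ge 2$. \textbf{The crux is exactly this codimension-two bound.} I expect it to fail precisely when $G$ is disconnected or $V$ carries a nonconstant semi-invariant: the zero divisor of a semi-invariant produces a codimension-one, totally ramified stratum over which jets do not lift, which is what makes $p_m$ non-surjective for $\C^\times$ acting on a line or $\Z/2$ acting on $\mathbb{A}^1$ by sign. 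Under the present hypotheses there are no nonconstant semi-invariants and $G$ is connected, and I would deduce $\operatorname{codim}_X Z\ge2$ from the structure theory of coregular representations, bounding the codimension of the non-principal stratum and the ramification of $p$ along divisors. Granting this, the pole argument closes the proof.
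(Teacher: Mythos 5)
A preliminary remark: the paper itself contains no proof of this statement; it is imported as Corollary 3.20 of \cite{LSS}. So your proposal can only be judged on its own terms. Its architecture is sensible and is in fact completable: injectivity of \eqref{arcmapm} from dominance of $p_m$, and surjectivity by combining (A) identification of $\C(V_m)^{G_m}$ with $\C(X_m)$ and (B) normality of the smooth scheme $X_m$ together with a codimension-two bound on the locus of jets that cannot be lifted through smooth points of $p$. But as you say yourself, the entire weight of the theorem rests on the codimension-two bound, and you do not prove it; ``granting this'' is granting precisely the part of the statement where the hypotheses (connectedness, coregularity, absence of one-dimensional invariant subspaces) must do their work.

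Worse, the route you sketch for the crux --- ``bounding the codimension of the non-principal stratum'' --- provably cannot work, because under the stated hypotheses the non-principal stratum can have codimension one. Concretely, take $G=\SL_2$ and $V=\text{Sym}^2\C^2$ (equivalently $\SO_3$ on $\C^3$, the case $n=3$, $m=1$ appearing in Section 5 of the paper): $V$ is coregular with $\cO(V)^G=\C[\Delta]$ and has no nonconstant semi-invariants, yet the non-principal stratum of $V/\!\!/G\cong\C$ is $\{0\}$, a divisor. What must actually be shown is the finer statement that your locus $Z=\{x: p^{-1}(x)\subseteq \mathrm{Crit}(p)\}$ misses every Luna stratum of codimension $\le 1$. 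Over the principal stratum, the slice representation $(H,N)$ satisfies $\cO(N)^H=\cO(N^H)$, so \'etale-locally $p$ is a linear projection and has no critical points at all there. Over a codimension-one stratum cut out by a reduced irreducible $h$, one argues: since $G$ is connected, every irreducible factor of the invariant $p^*h$ is a semi-invariant; by the hypothesis on one-dimensional invariant subspaces each factor is invariant; injectivity of $p^*$ on $\cO(X)$ then forces $p^*h$ to be irreducible, hence reduced; \'etale-locally $p^*h$ is a unit times the generator $q$ of the slice invariants, so $q$ is reduced and $dq\neq 0$ generically on the null cone of the slice, i.e.\ the fiber over a generic point of the stratum does contain smooth points of $p$. (In the example above, $0\notin Z$ because the fiber over $0$, the cone of squares, contains smooth points of $\Delta$ even though it is a non-principal fiber.) None of this analysis appears in your write-up, and it is exactly where both hypotheses enter. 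Two lesser inaccuracies: the generic fiber of $p_m$ need not be a single $G_m$-orbit (for $\SL_n$ on $m<n$ copies of $\C^n$ the quotient is a point and $V_m$ is certainly not one orbit); it only contains a dense orbit, which is all that the Rosenlicht argument requires. And ``Luna's theory'' fibers the quotient over strata using closed orbits, whereas under these hypotheses principal orbits need not be closed ($\SL_2$ on $\C^2$), so even your step (A) needs more care than the appeal you make to it.
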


It is immediate that \eqref{arcmapm} is an isomorphism for $m=\infty$ as well. There are several other examples in \cite{LSS} where \eqref{arcmapm} is an isomorphism for $m=\infty$. This holds when $G$ is one of the classical groups $\text{SL}_n$, $\text{GL}_n$, $\text{SO}_n$, or $\text{Sp}_{2n}$ and $V$ is a sum of copies of the standard representation of $G$ (and its dual in the case of $\text{SL}_n$ or $\text{GL}_n$), such that $V /\!\!/ G$ is a complete intersection. It holds when $G = \mathbb{C}^*$ and $V$ is a representation whose weights are all $\pm 1$. Finally, it holds when $G =  \text{SL}_2$ and $V$ is the sum of an arbitrary finite number of copies of the standard representation. 

The relevance of invariant rings of the form $\cO(V_{\infty})^{G_{\infty}}$ to our vertex algebra commutant problem is as follows. By \eqref{actiontheta} and \eqref{jetaction}, the map $$\Phi: \text{gr}(\cS(V)) \ra \cO((V\oplus V^*)_{\infty}),\qquad \beta^{x}_k \mapsto x^{(k)}, \qquad \gamma^{x'}_k \mapsto (x')^{(k)},$$ is an isomorphism of $\gg[t]$-algebras. Moreover, $\Phi^{-1} \circ  D\circ \Phi = \partial$, so we have an isomorphism of differential graded algebras \begin{equation} \label{dgaiso}\text{gr}(\cS(V))^{\gg[t]} \cong  \cO((V\oplus V^*)_{\infty})^{\gg[t]} = \cO((V\oplus V^*)_{\infty})^{G_{\infty}}.\end{equation} 
In general, the map \begin{equation} \label{arcmap} \cO((\quot{(V\oplus V^*)} G)_{\infty}) \rightarrow \cO((V\oplus V^*)_{\infty})^{G_{\infty}},\end{equation} is not an isomorphism, and even when it is, we may not be able to reconstruct $\cS(V)^{\gg[t]}$ because \eqref{injgamm} need not be surjective. We will see that in our main examples, both \eqref{injgamm} and \eqref{arcmap} are isomorphisms. Since $G$ is reductive, we may choose a finite set $\{f_1,\dots, f_r\}$ of generators for $\cO(V \oplus V^*)^G$ of homogeneous degrees $d_1,\dots, d_r$. Since \eqref{arcmap} is an isomorphism, $f_1,\dots, f_r$ correspond to generators of $\text{gr}(\cS(V))^{\gg[t]}$ as a differential algebra, and since \eqref{injgamm} is an isomorphism, we can find vertex operators $f_1(z),\dots, f_r(z) \in \cS(V)^{\gg[t]}$ satisfying $\phi_{d_i}(f_i(z)) = f_i$. By Lemma \ref{reconlem}, this is a strong generating set for $\cS(V)^{\gg[t]}$. Similarly, let $\{r_1,\dots, r_t\}$ be a set of generators for the ideal of relations among $f_1,\dots, f_r$. As in Lemma \ref{idealrecon}, there exist normally ordered polynomial relations $r_j(z)$, which are quantum corrections of $r_j$, for $j=1,\dots, n$.

\begin{thm} \label{minrelations} Suppose that both \eqref{injgamm} and \eqref{arcmap} are isomorphisms. If $\{f_1,\dots, f_r\}$ is a minimal generating set for $\cO(V \oplus V^*)^G$,  $\{f_1(z), \dots,f_r(z)\}$ is a minimal strong generating set for $\cS(V)^{\gg[t]}$ as a vertex algebra. Moreover, all normally ordered polynomial relations among $f_1(z), \dots, f_r(z)$ and their derivatives are consequences of $r_1(z),\dots, r_t(z)$ in the sense of Lemma \ref{idealrecon}. \end{thm}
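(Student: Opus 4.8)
The plan is to deduce everything from the chain of $\partial$-ring isomorphisms $\text{gr}(\cS(V)^{\gg[t]}) \cong \cO((V\oplus V^*)_\infty)^{G_\infty} \cong R_\infty$ furnished by \eqref{injgamm} and \eqref{arcmap}, where $R = \cO(V\oplus V^*)^G$ and $R_\infty = \cO((\quot{(V\oplus V^*)}G)_\infty)$. First I record the two facts about $R_\infty$ that drive the argument. Since $R = \C[f_1,\dots,f_r]/(r_1,\dots,r_t)$, the arc-space presentation gives $R_\infty = \C[f_i^{(k)}]/\langle D^\ell r_j\rangle$, so $\{f_1,\dots,f_r\}$ generates $R_\infty$ as a $\partial$-ring and the $\partial$-ideal of relations among the $f_i$ and their derivatives is generated by $\{r_1,\dots,r_t\}$. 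Transporting the first fact through the isomorphisms, the symbols $f_i = \phi_{d_i}(f_i(z))$ generate $\text{gr}(\cS(V)^{\gg[t]})$ as a $\partial$-ring, so Lemma \ref{reconlem} immediately yields that $\{f_1(z),\dots,f_r(z)\}$ strongly generates $\cS(V)^{\gg[t]}$. The relations statement is then exactly Lemma \ref{idealrecon} applied to this generating set, since the $r_j(z)$ are, by construction, quantum corrections of a generating set of the relation ideal.

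The substance is minimality, which I would prove in two stages. The first stage is purely commutative-algebraic: a minimal algebra generating set of $R$ gives a minimal $\partial$-ring generating set of $R_\infty$. Here I use the arc-weight grading $R_\infty = \bigoplus_{n\geq 0} R_\infty[n]$, with $\text{wt}(f_i^{(k)}) = k$ and $R_\infty[0] = R$. If some $f_1$ lay in the $\partial$-subring generated by $\{f_i : i\geq 2\}$, so $f_1 = P(\{f_i^{(k)} : i\geq 2\})$, then extracting the weight-zero component (all $k=0$) would express $f_1$ as a polynomial in $f_2,\dots,f_r$ inside $R_\infty[0]=R$, contradicting minimality in $R$. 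Thus $\{f_1,\dots,f_r\}$ is a minimal $\partial$-ring generating set of $R_\infty \cong \text{gr}(\cS(V)^{\gg[t]})$.

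The second stage is to transfer this minimality up from $\text{gr}(\cS(V)^{\gg[t]})$ to $\cS(V)^{\gg[t]}$, i.e.\ to show $\{f_1(z),\dots,f_r(z)\}$ cannot be shortened as a strong generating set. I would grade everything by conformal weight and $\beta\gamma$-charge. Because a weight-zero factor must be a product of $\gamma$'s and hence has strictly positive charge, while a factor of conformal weight $w$ has charge at least $-w$, the length of any normally ordered monomial in the $f_i(z)$ of fixed conformal weight and charge is bounded; consequently each bigraded piece is spanned by finitely many monomials of bounded length. This finiteness is what lets one pass from a strong generating set to its symbols degree by degree, yielding the needed converse to Lemma \ref{reconlem}: the symbols of any strong generating set of a vertex algebra in $\cR$ again generate $\text{gr}$ as a $\partial$-ring. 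Granting this, if some $f_1(z)$ were removable then $\{f_i : i\geq 2\}$ would $\partial$-generate $\text{gr}(\cS(V)^{\gg[t]}) \cong R_\infty$, contradicting the first stage; hence the set is minimal.

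The main obstacle is precisely this converse reconstruction. The difficulty is that an expression for a low-degree $f_1(z)$ through the remaining generators may involve monomials whose nominal (product) degree far exceeds $\deg f_1(z)$, with cancellation of leading symbols; such cancellations correspond to relations in $R_\infty$ and are governed by Lemma \ref{idealrecon}, but they threaten to reintroduce $f_1$ among the lower-order quantum corrections. Ruling out this self-coupling is where I expect to spend the most effort: the length bound from the charge grading guarantees that the reduction of nominal degree terminates, the arc-weight argument of the first stage forces the surviving top-degree symbols to be derivative-free products of the $f_i$, and the \emph{surjectivity} in \eqref{injgamm} (injectivity being automatic) is what matches the bigraded dimensions and ultimately forbids a shorter strong generating set.
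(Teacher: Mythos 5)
Your treatment of strong generation and of the relations statement coincides with the paper's proof: the composite of \eqref{injgamm} and \eqref{arcmap} identifies $\text{gr}(\cS(V)^{\gg[t]})$ with $\cO((\quot{(V\oplus V^*)}{G})_{\infty})$, whose presentation $\C[f_i^{(k)}]/\langle D^\ell r_j\rangle$ shows that the $f_i$ generate it as a $\partial$-ring and that the $r_j$ generate the differential ideal of relations; Lemma \ref{reconlem} then gives strong generation and Lemma \ref{idealrecon} gives the second statement. No divergence there.

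The divergence is on minimality, and it cuts both ways. The paper disposes of minimality with the single clause that the first statement is ``clear from Lemma \ref{reconlem}''; but that lemma only produces strong generating sets and says nothing about proper subsets, so the paper is in effect asserting the very point you isolate. Your first stage --- that minimality of $\{f_1,\dots,f_r\}$ in $R$ forces minimality as $\partial$-ring generators of $R_\infty$, by extracting the arc-weight-zero component (using $R_\infty[0]=R$ and homogeneity of $\langle D^\ell r_j\rangle$ for the weight grading) --- is correct, complete, and is a genuine addition to what the paper writes. Your second stage is where the gap sits. The ``converse to Lemma \ref{reconlem}'' (symbols of any strong generating set generate $\text{gr}$ as a $\partial$-ring) is exactly what is needed, and you correctly name the obstruction: an expression for $f_1(z)$ in the other generators may have top symbols cancelling, and reducing its nominal degree via Lemma \ref{idealrecon} reintroduces $f_1(z)$ through the coefficients and through the $r_j(z)$ themselves. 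But your proposed resolution is a list of ingredients rather than an argument: ``the length bound guarantees termination'' and ``surjectivity matches the bigraded dimensions'' are not carried out, and the reduction you describe is not even well defined, since rewriting the cancelling top part as lower-degree monomials in the \emph{remaining} generators presupposes the statement being proved. One way to actually close it: use the (conformal weight, $\beta\gamma$-charge) bigrading of $\cA=\cS(V)^{\gg[t]}$, whose components are finite dimensional with $\cA_{(0,0)}=\C$, and prove a Nakayama-type lemma --- a bihomogeneous set strongly generates $\cA$ if and only if its image spans $\cA_+/C$, where $C=\text{span}\{:ab:\ \mid a,b\in\cA_+\}+\partial\cA_+$ --- by induction on the quantity $2\,\text{wt}+\text{charge}$, which is nonnegative, additive, and strictly drops on factors and on all $\circ_n$-corrections with $n\geq 0$. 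Comparing $\cA_+/C$ with $(R_\infty)_+/\big((R_\infty)_+^2+\partial(R_\infty)_+\big)$, whose dimension equals $r$ by your first stage, then rules out any smaller strong generating set. Until some such argument is supplied, your minimality proof (like the paper's) is incomplete.
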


\begin{proof} The first statement is clear from Lemma \ref{reconlem}. Since \eqref{arcmap} is an isomorphism, it follows that $r_1,\dots, r_t$ generate the ideal of relations in $\cO((V\oplus V^*)_{\infty})^{G_{\infty}}$ as a differential ideal. The second statement then follows from Lemma \ref{idealrecon}.
\end{proof}

Finally, when both \eqref{injgamm} and \eqref{arcmap} are isomorphisms, our commutative diagram
$$\begin{array}[c]{ccc} \cS(V)^{\gg[t]}&\stackrel{}{\rightarrow}& \cS(V)  \\ \downarrow\scriptstyle{\pi}&&\downarrow\scriptstyle{\pi_{\text{Zhu}}}\\ \cD(V)^{G} &\stackrel{}{\rightarrow}& \cD(V) \end{array}$$ has the following nice property.

\begin{thm} \label{zhudescription} Suppose that \eqref{injgamm} and \eqref{arcmap} are isomorphisms. Then the Zhu algebra $A(\cS(V)^{\gg[t]})$ is isomorphic to $\cD(V)^G$ and the map $\pi$ above is surjective. \end{thm}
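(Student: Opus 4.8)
The plan is to exploit the commutative diagram (\ref{commdiag}) together with the structural results about generators and relations that we have just established. The key observation is that both vertical maps are the Zhu projections $\pi_{\text{Zhu}}$, and that the Zhu functor is, in a suitable sense, compatible with taking invariants once we know the strong generators and their relations explicitly.

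First I would establish that $A(\cS(V)^{\gg[t]})$ and $\cD(V)^G$ are generated by the images of the same elements. We know from Theorem \ref{minrelations} that $\cS(V)^{\gg[t]}$ is strongly generated by vertex operators $f_1(z),\dots,f_r(z)$ with $\phi_{d_i}(f_i(z)) = f_i$, where $f_1,\dots,f_r$ generate $\cO(V\oplus V^*)^G$. Under the Zhu projection applied to the top row of (\ref{commdiag}), each $f_i(z)$ maps to an element $\pi_{\text{Zhu}}(f_i(z)) = \pi(f_i(z)) \in \cD(V)^G$. Since $A(\cS(V)^{\gg[t]})$ is generated by $\{\pi_{\text{Zhu}}(f_i(z))\}$ (by the remark following Zhu's theorem, using that the $f_i(z)$ strongly generate), and since $\cD(V)^G \cong \text{gr}(\cD(V))^G \cong \text{Sym}(V\oplus V^*)^G$ is generated by the classical images of $f_1,\dots,f_r$, the two algebras are generated by corresponding sets of elements. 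I would then define a map $A(\cS(V)^{\gg[t]})\ra \cD(V)^G$ on generators and check it is a well-defined algebra homomorphism; the obstruction to well-definedness is precisely that the relations among the generators must be respected.

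This is where Theorem \ref{minrelations} does the essential work, and I expect it to be the main obstacle to make rigorous. The relations $r_j(z)$ among the $f_i(z)$ are quantum corrections of the classical relations $r_j$ generating the ideal of relations among $f_1,\dots,f_r$ in $\cO(V\oplus V^*)^G$. Applying $\pi_{\text{Zhu}}$ to these vertex-algebra relations should yield exactly the defining relations of $\cD(V)^G$ as a filtered deformation of $\text{Sym}(V\oplus V^*)^G$; the heart of the argument is to track how the filtration degree interacts with the Zhu product $*$ and to verify that no new relations appear and none are lost. Concretely, one compares the Bernstein filtration on $\cD(V)^G$ (for which $\text{gr}\,\cD(V)^G \cong \text{Sym}(V\oplus V^*)^G$) with the filtration on $A(\cS(V)^{\gg[t]})$ induced from the good increasing filtration on $\cS(V)^{\gg[t]}$; the isomorphism (\ref{injgamm}) guarantees these associated graded objects agree, and a standard filtered-to-graded argument upgrades the surjection on associated graded pieces to an isomorphism of the filtered algebras.

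Finally, surjectivity of $\pi$ follows once the isomorphism $A(\cS(V)^{\gg[t]}) \cong \cD(V)^G$ is in place: the bottom horizontal map in (\ref{commdiag}) identifies $\cD(V)^G$ as the image of $\cS(V)^{\gg[t]}$ under $\pi_{\text{Zhu}}$, so $\pi$ hits all of $\cD(V)^G$. Alternatively, surjectivity is immediate from the fact that the generators $\pi(f_i(z))$ of $\cD(V)^G$ lie in the image of $\pi$ and $\pi$ is an algebra map. The delicate point throughout is verifying that the passage to associated graded commutes appropriately with $\pi_{\text{Zhu}}$, which is exactly the content guaranteed by the hypotheses that (\ref{injgamm}) and (\ref{arcmap}) are isomorphisms.
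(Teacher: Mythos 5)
Your identification of the generators and your argument for the surjectivity of $\pi$ agree with the paper, but your construction of the isomorphism $A(\cS(V)^{\gg[t]}) \cong \cD(V)^G$ has a genuine gap. You propose to define a homomorphism $A(\cS(V)^{\gg[t]}) \rightarrow \cD(V)^G$ on the generators $\tilde f_i = \pi_{\text{Zhu}}(f_i(z))$ and to check well-definedness by verifying that the relations among the generators are respected. But well-definedness of a map out of $A(\cS(V)^{\gg[t]})$ prescribed on generators requires knowing \emph{all} relations satisfied by the $\tilde f_i$ in $A(\cS(V)^{\gg[t]})$ (an upper bound on relations), whereas Theorem \ref{minrelations} only supplies relations in the vertex algebra $\cS(V)^{\gg[t]}$, which push forward to a \emph{lower} bound: certain relations that do hold in the Zhu algebra. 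Since $A(\cS(V)^{\gg[t]})$ is the quotient of $\cS(V)^{\gg[t]}$ by the ideal $O(\cS(V)^{\gg[t]})$, you have no a priori control over what additional relations among the $\tilde f_i$ this quotient introduces; ruling those out is essentially the content of the theorem. Your proposed shortcut --- that \eqref{injgamm} ``guarantees these associated graded objects agree'' --- conflates two different filtrations: \eqref{injgamm} concerns $\text{gr}(\cS(V)^{\gg[t]})$ with respect to the good increasing filtration, which under the hypotheses is the large differential algebra $\cO((V\oplus V^*)_{\infty})^{G_{\infty}}$, not $\text{Sym}(V\oplus V^*)^G$. The associated graded of the Zhu algebra is a further quotient by the image of the Zhu ideal, and identifying it with $\text{Sym}(V\oplus V^*)^G$ is not a standard filtered-to-graded argument; it is exactly the ``no new relations'' statement your sketch defers rather than proves.

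The paper sidesteps the well-definedness problem entirely by a device you missed: functoriality of the Zhu functor. The inclusion $\cS(V)^{\gg[t]} \hookrightarrow \cS(V)$ automatically induces an algebra homomorphism $h: A(\cS(V)^{\gg[t]}) \rightarrow A(\cS(V)) = \cD(V)$, with no relations to check. One then shows that $h$ has image $\cD(V)^G$ (since $h(\tilde f_i) = f_i$ up to lower-order corrections in the Bernstein filtration, plus induction on degree) and trivial kernel. Injectivity is where Theorem \ref{minrelations} does the real work: an element of $\text{Ker}(h)$ of degree $d$, written as a polynomial in the $\tilde f_i$, maps under $h$ to a relation in $\cD(V)^G$ whose leading term is a classical relation; by Theorem \ref{minrelations} there is a quantum-corrected relation $r(z)$ which is identically zero in $\cS(V)^{\gg[t]}$, so $\pi_{\text{Zhu}}(r(z)) = 0$ in $A(\cS(V)^{\gg[t]})$ and has the same leading term; subtracting it lowers the degree, and induction finishes. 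If you reorganize your argument around the functorially induced map $h$ and this induction, it becomes correct; as written, the step you yourself flag as ``the main obstacle to make rigorous'' is precisely the missing proof.
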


\begin{proof} Fix a generating set $\{f_1,\dots, f_r\}$ for $\cD(V)^G$ such that $f_i$ has degree $d_i$, which under the above hypotheses corresponds to a strong generating set $\{f_1(z),\dots, f_r(z)\}$ for $\cS(V)^{\gg[t]}$ as a vertex algebra. Without loss of generality, we may assume that $f_i(z) \in \cS(V)^{\gg[t]}_{(d_i)} \setminus  \cS(V)^{\gg[t]}_{(d_i-1)}$. Then $A(\cS(V)^{\gg[t]})$ is generated by $\{\tilde{f}_1,\dots, \tilde{f}_r\}$ where $\tilde{f}_i = \pi_{\text{Zhu}}(f_i(z))$. 

Clearly $\pi(f_i(z)) = f_i$ up to corrections of lower degree in the Bernstein filtration, so by induction on degree we see that $\pi$ is surjective. The inclusion $\cS(V)^{\gg[t]} \hookrightarrow \cS(V)$ induces a map of Zhu algebras $h:A(\cS(V)^{\gg[t]}) \rightarrow A(\cS(V)) = \cD(V)$ whose image is clearly $\cD(V)^G$ since $h(\tilde{f}_i) =f_i$ up to lower order corrections. We claim that $h$ has trivial kernel, and is therefore an isomorphism from $A(\cS(V)^{\gg[t]})$ to $\cD(V)^G$. Let $r\in \text{Ker}(h)$ be an element of degree $d$, regarded as a polynomial among the $\tilde{f}_i$'s. Under $h$, it maps to a relation in $\cD(V)^G$ whose leading term is the same, with $\tilde{f}_i$ replaced by $f_i$. By Theorem \ref{minrelations}, there is an analogous relation $r(z)\in \cS(V)^{\gg[t]}$, which is obtained up to lower order correction by replacing each $f_i$ with $f_i(z)$ and replacing ordinary products with Wick products. Since $r(z)$ is identically zero, and $\pi_{\text{Zhu}}(r(z)) \in A(\cS(V)^{\gg[t]}$ has the same leading term as $r$, the claim follows by induction on degree. \end{proof}

\section{The cases $G=\text{SL}_n$ and $G=\text{GL}_n$}\label{sectionsln}

For $n\geq 1$, let $V$ be the direct sum of $m$ copies of $\mathbb{C}^n$, with basis $\{x_{1,j},\dots, x_{n,j}|\ j=1,\dots, m\}$, which is just the space of $n\times m$ matrices. The left action of $\text{GL}_n$ and right action of $\text{GL}_m$ on $V$ induce actions of $\text{GL}_n$ and $\text{GL}_m$ on $\cD(V)$, infinitesimal actions $\gg\gl_n\ra \text{Der}(\cD(V))$ and $\gg\gl_m\ra \text{Der}(\cD(V))$, and algebra homomorphisms
\begin{equation}\label{taugln} \tau: U( \gg\gl_n) \ra \cD(V), \qquad  \tau': U (\gg\gl_m) \ra \cD(V).\end{equation} By classical $\text{GL}_n$ -$\text{GL}_m$ Howe duality, $\cD(V)^{\text{GL}_n} = \tau'(U( \gg\gl_m))$ and $\cD(V)^{\text{GL}_m} = \tau(U (\gg\gl_n))$. Moreover, $\cD(V)^{\text{GL}_n}$ and $\cD(V)^{\text{GL}_m}$ form a pair of mutual commutants inside $\cD(V)$.

Next we consider $\cD(V)^{\text{SL}_n}$ for $n\geq 2$. If $m<n$, $\cD(V)^{\text{SL}_n} = \tau'(U (\gg\gl_m))$, but $\cD(V)^{\text{SL}_n}$ and $\tau(U( \gs\gl_n))$ are not mutual commutants because $$\text{Com}(\cD(V)^{\text{SL}_n},\cD(V)) = \text{Com}(\tau'(U (\gg\gl_m)),\cD(V)) = \cD(V)^{\text{GL}_m}= \tau(U( \gg\gl_n)).$$ For $m\geq n$, let $J = \{j_1,\dots, j_n\}\subset \{1,\dots, m\}$ be a set of distinct indices, and let 
$$d_{J} = \det \left[\begin{matrix} x_{1,j_1} & \cdots & x_{1,j_n} \cr  \vdots  & & \vdots  \cr  x_{n,j_1}  & \cdots & x_{n,j_n} \end{matrix} \right], \qquad d'_{J} = \det \left[\begin{matrix} x'_{1,j_1} & \cdots & x'_{1,j_n} \cr  \vdots  & & \vdots  \cr  x'_{n,j_1}  & \cdots & x'_{n,j_n} \end{matrix} \right].$$
Then $\cD(V)^{\text{SL}_n}$ is generated by $\tau'(U(\gg\gl_m))$ together with the $d_J$ and $d'_J$, which clearly do not lie in $\tau'(U( \gg\gl_m))$ \cite{We}.

At the vertex algebra level, the induced maps
$$\hat{\tau}: V_{-m}(\gg\gl_n) \ra \cS(V), \qquad \hat{\tau}': V_{-n}(\gg\gl_{m}) \ra \cS(V)$$ corresponding to (\ref{taugln}) are given by (\ref{deftheta}). Let \begin{equation}\label{thetaprime} \Theta = \hat{\tau}(V_{-m}(\gg\gl_n)), \qquad \Theta'  = \hat{\tau}'(V_{-n}(\gg\gl_{m})).\end{equation} For $n,m\geq 2$, we can restrict $\hat{\tau}$ and $\hat{\tau}'$ to the subalgebras $V_{-m}(\gs\gl_n)$ and $V_{-n}(\gs\gl_{m})$. Let $$\bar{\Theta} = \hat{\tau}(V_{-m}(\gs\gl_{n})), \qquad \bar{\Theta}' = \hat{\tau}'(V_{-n}(\gs\gl_{m})).$$ We have $\Theta = \bar{\Theta}\otimes \cH$ and $\Theta' =  \bar{\Theta}'\otimes \cH$, where $\cH$ is a copy of the Heisenberg vertex algebra corresponding to the center of both $\gg\gl_n$ and $\gg\gl_m$. The generator of $\cH$ is \begin{equation}\label{defofthetah} e = \sum_{i=1}^n\sum_{j=1}^m :\gamma^{x'_{i,j}}\beta^{x_{i,j}}:,\end{equation} which corresponds to the Euler operator $\sum_{i=1}^n\sum_{j=1}^m x'_{i,j} \frac{\partial}{\partial x'_{i,j}}$ and satisfies the OPE relation $e(z)e(w) \sim -mn (z-w)^{-2}$

We will assume that $n\geq 2$ for the rest of this section, since the case $n=1$ is a special case of Theorem 7.3 of \cite{L}. First we consider $\cS(V)^{\gs\gl_n[t]}$.
\begin{thm} \label{slncasei} Let $1\leq m<n$. Then $$\cS(V)^{\gs\gl_n[t]} = \Theta'.$$ In particular, $\cS(V)^{\gs\gl_n[t]}$ has a minimal strong finite generating set $$\{\theta^{e_{a,b}} = \hat{\tau}' (X^{e_{a,b}})|\ 1\leq a \leq m,\ 1\leq b\leq m\},$$ where $\{e_{a,b}\}$ is the standard basis for $\gg\gl_m$. There are no normally ordered polynomial relations among the $\theta^{e_{a,b}}$'s and their derivatives, so $\hat{\tau}'$ is injective and $\Theta' \cong V_{-n}(\gg\gl_m)$. \end{thm}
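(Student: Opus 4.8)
The plan is to verify that, for $G=\text{SL}_n$ acting on $V\oplus V^*$ with $m<n$, both \eqref{injgamm} and \eqref{arcmap} are isomorphisms, and then to read the statement off from Lemma~\ref{reconlem} and Theorem~\ref{minrelations}. I would begin with the purely classical input. Since $m<n$, no $n\times n$ minor can be formed from the $m$ standard vectors or from the $m$ covectors, so the first fundamental theorem for $\text{SL}_n$ produces no determinantal invariants and $\cO(V\oplus V^*)^{\text{SL}_n}$ coincides with $\cO(V\oplus V^*)^{\text{GL}_n}$, freely generated by the $m^2$ quadratic contractions $q_{a,b}=\sum_{i=1}^n x'_{i,a}x_{i,b}$ for $1\le a,b\le m$. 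Equivalently the map $(x,x')\mapsto (x')^{\mathsf{T}}x$ surjects onto all of $\text{Mat}_{m\times m}$ (rank $\le m$ is no constraint when $m\le n$), so $\quot{(V\oplus V^*)}{\text{SL}_n}\cong\mathbb{C}^{m^2}$ is a smooth affine space and $V\oplus V^*$ is a coregular $\text{SL}_n$-representation.

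With coregularity established, \eqref{arcmap} is immediate: $\text{SL}_n$ is semisimple, so the technical hypothesis of Theorem~\ref{jetthm} (no nontrivial one-dimensional invariants) holds automatically, and the remark following that theorem gives that \eqref{arcmap} is an isomorphism for $m=\infty$. Combining this with \eqref{dgaiso}, I get that $\text{gr}(\cS(V))^{\gs\gl_n[t]}\cong\cO((\mathbb{C}^{m^2})_\infty)$ is generated as a $\partial$-ring by the classes of the $q_{a,b}$.

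The heart of the argument is the surjectivity of \eqref{injgamm}, which in general fails; here I would force it using explicit invariant lifts of the $q_{a,b}$. Because the right $\gg\gl_m$-action commutes with the left $\gs\gl_n$-action, we have $\Theta'\subseteq\text{Com}(\bar\Theta,\cS(V))=\cS(V)^{\gs\gl_n[t]}$, so each $\theta^{e_{a,b}}=\hat\tau'(X^{e_{a,b}})$ is an invariant vertex operator lying in $\cS(V)_{(2)}\setminus\cS(V)_{(1)}$. Inspecting \eqref{deftheta} shows that, under the isomorphism $\Phi$ of \eqref{dgaiso}, the symbol $\phi_2(\theta^{e_{a,b}})$ is a nonzero scalar multiple of $q_{a,b}$. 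Hence the image $R$ of \eqref{injgamm} is a $\partial$-subring of $\text{gr}(\cS(V))^{\gs\gl_n[t]}$ containing every $q_{a,b}$; since those generate the target as a $\partial$-ring by the previous paragraph, $R=\text{gr}(\cS(V))^{\gs\gl_n[t]}$ and \eqref{injgamm} is an isomorphism.

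The reconstruction machinery then closes the proof. By Lemma~\ref{reconlem} the $\theta^{e_{a,b}}$ strongly generate $\cS(V)^{\gs\gl_n[t]}$; since $V_{-n}(\gg\gl_m)$ is strongly generated by the $X^{e_{a,b}}$ and $\hat\tau'$ is a vertex algebra homomorphism, the subalgebra they generate is exactly $\Theta'$, giving $\cS(V)^{\gs\gl_n[t]}=\Theta'$. As $\{q_{a,b}\}$ is a minimal generating set of the \emph{free} polynomial ring $\cO(V\oplus V^*)^{\text{SL}_n}$, the ideal of relations is trivial, so Theorem~\ref{minrelations} yields both minimality of $\{\theta^{e_{a,b}}\}$ and the absence of any normally ordered polynomial relation among the $\theta^{e_{a,b}}$ and their derivatives. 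Since $V_{-n}(\gg\gl_m)$ is itself freely strongly generated by the $X^{e_{a,b}}$, any element of $\ker\hat\tau'$ would manufacture such a relation, so $\ker\hat\tau'=0$, whence $\hat\tau'$ is injective and $\Theta'\cong V_{-n}(\gg\gl_m)$. I expect the only genuinely delicate point to be the surjectivity of \eqref{injgamm}: it is precisely the step that breaks down in general, as stressed in the introduction, and it succeeds here only because every classical generator admits an invariant quantum lift with the correct leading symbol.
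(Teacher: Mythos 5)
Your proposal is correct and follows essentially the same route as the paper's proof: classical invariant theory shows $\cO(V\oplus V^*)^{\text{SL}_n}$ is a polynomial ring on the $m^2$ quadratic invariants (so $V\oplus V^*$ is coregular), Theorem \ref{jetthm} makes \eqref{arcmap} an isomorphism, the explicit invariant lifts $\theta^{e_{a,b}}$ force surjectivity of \eqref{injgamm}, and Theorem \ref{minrelations} finishes. Your direct freeness argument (surjectivity of $(x,x')\mapsto (x')^{\mathsf{T}}x$ onto $\text{Mat}_{m\times m}$ when $m\leq n$) is a fine substitute for the paper's appeal to Weyl's second fundamental theorem.

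However, one step is justified by an invalid inference, even though its conclusion is true. You deduce $\Theta'\subseteq\text{Com}(\bar{\Theta},\cS(V))$ purely from the fact that the right $\gg\gl_m$-action on $V$ commutes with the left $\gs\gl_n$-action. Commutativity of the actions only kills the first-order poles in the relevant OPEs; quantum commutation also requires the second-order poles to vanish, and these are given by the trace pairing $\theta^{\xi}\circ_1\theta^{e_{a,b}} = -\text{Tr}\big(\rho(\xi)\rho'(e_{a,b})\big)$, which is not controlled by the commutator. The same reasoning applied verbatim to all of $\gg\gl_n$ (whose action also commutes with that of $\gg\gl_m$) would yield $\Theta'\subseteq\text{Com}(\Theta,\cS(V))$, which is false: for the identity $I_n\in\gg\gl_n$ one computes $\theta^{I_n}\circ_1\theta^{e_{a,a}} = -n\neq 0$. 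What saves your claim is precisely the tracelessness of $\xi\in\gs\gl_n$: since $\rho(\xi)\rho'(e_{a,b}) = \xi\otimes e_{a,b}$ on $V\cong\mathbb{C}^n\otimes\mathbb{C}^m$, the pairing equals $\text{Tr}(\xi)\,\delta_{a,b}=0$. Once this one-line OPE check is inserted, your argument is complete and agrees with the paper's.
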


\begin{proof} The generators $\{\tau'(e_{a,b})\}$ of $\cD(V)^{\text{SL}_n}$ correspond to generators of $\cO(V\oplus V^*)^{\text{SL}_n}$, which by abuse of notation we also denote by $\tau'(e_{a,b})$. For $m<n$, Weyl's second fundamental theorem of invariant theory for $\text{SL}_n$ shows that there are no relations among the $\tau'(e_{a,b})$'s, so $V\oplus V^*$ is coregular. The remaining hypotheses of Theorem \ref{jetthm} are also satisfied, so these elements generate $\cO((V\oplus V^*)_{\infty})^{\gs\gl_n[t]} \cong \text{gr}(\cS(V))^{\gs\gl_n[t]}$ as a differential algebra. Finally, the vertex operators $\theta^{e_{a,b}}$ corresponding to $\tau'(e_{a,b})$ lie in $\cS(V)^{\gs\gl_n[t]}$, so \eqref{injgamm} is surjective, and hence is an isomorphism. The claim then follows from Theorem \ref{minrelations}. \end{proof}

For $m\geq n$, the elements $d_J, d'_J\in\cD(V)^{\text{SL}_n}$ correspond to vertex operators 
$$D_{J} = \det \left[\begin{matrix} \beta^{x_{1,j_1}} & \cdots &  \beta^{x_{1,j_n}} \cr  \vdots  & & \vdots  \cr   \beta^{x_{n,j_1}}  & \cdots &  \beta^{x_{n,j_n}} \end{matrix} \right], \qquad D'_{J} = \det \left[\begin{matrix} \gamma^{x'_{1,j_1}} & \cdots & \gamma^{x'_{1,j_n}} \cr  \vdots  & & \vdots  \cr  \gamma^{x'_{n,j_1}}  & \cdots & \gamma^{x'_{n,j_n}} \end{matrix} \right],$$
which satisfy $\pi_{\text{Zhu}}(D_J) = d_J$ and $\pi_{\text{Zhu}}(D'_J) = d'_J$. Both $D_J$ and $D'_J$ lie in $\cS(V)^{\gs\gl_n[t]}$; this is clear because the generators $\{\theta^{\xi}| \xi\in \gs\gl_n\}$ of $\bar{\Theta}$ are of the form (\ref{deftheta}), and the nonnegative circle products of $\theta^{\xi}$ with $D_J$ or $D'_J$, which have either only $\beta$'s or $\gamma$'s, can have no double contractions. 

In the case $m=n$, $V\oplus V^*$ is not coregular but $(V\oplus V^*)/\!\!/ \text{SL}_n$ is a hypersurface, and \eqref{arcmap} is an isomorphism by Theorem 4.8 of \cite{LSS}. In this case there are only two determinants $d$ and $d'$, and the ideal of relations in $\cD(V)^{\text{SL}_n}$ is generated by a single relation whose leading term is $dd' =  \text{det}[\tau'(e_{a,b})]$ for $a,b = 1,\dots, n$.  

\begin{thm} For $m=n$, $\cS(V)^{\gs\gl_n[t]}$ is strongly generated as a vertex algebra by $\Theta'$ together with $D$ and $D'$. Moreover, all normally ordered polynomial relations are consequences of the above classical relation in the sense of Lemma \ref{idealrecon}. \end{thm}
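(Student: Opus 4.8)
The plan is to verify the hypotheses of Theorem~\ref{minrelations} in the case $m=n$ and then read off the strong generating set and relations. The key input is the assertion, already quoted from Theorem 4.8 of \cite{LSS}, that \eqref{arcmap} is an isomorphism when $(V\oplus V^*)/\!\!/\text{SL}_n$ is a hypersurface. Granting this, the main task is to establish that \eqref{injgamm} is also surjective, i.e., that every generator of $\text{gr}(\cS(V))^{\gs\gl_n[t]}\cong \cO((V\oplus V^*)_\infty)^{\text{SL}_n{}_\infty}$ lifts to an honest vertex operator in $\cS(V)^{\gs\gl_n[t]}$. By classical invariant theory for $\text{SL}_n$ with $m=n$ copies, $\cO(V\oplus V^*)^{\text{SL}_n}$ is generated by the $\tau'(e_{a,b})$ together with the two determinants $d$ and $d'$, subject to a single relation with leading term $dd' = \det[\tau'(e_{a,b})]$.

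First I would record that the vertex operators $\theta^{e_{a,b}}$, $D$, and $D'$ all lie in $\cS(V)^{\gs\gl_n[t]}$: for the $\theta^{e_{a,b}}$ this is built into the definition of $\Theta'$, and for $D,D'$ it is exactly the observation made just before the statement, namely that the nonnegative circle products of a generator $\theta^\xi$ of $\bar\Theta$ (which has the normally ordered form \eqref{deftheta}) with $D$ or $D'$ admit no double contractions, since $D$ involves only $\beta$'s and $D'$ only $\gamma$'s. Because these elements satisfy $\phi_{d_i}(\cdot)$ equal to the corresponding classical generators $\tau'(e_{a,b})$, $d$, $d'$, surjectivity of \eqref{injgamm} follows: the image $R$ contains a generating set for the target $\partial$-ring, hence equals it.

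With both \eqref{injgamm} and \eqref{arcmap} isomorphisms, Theorem~\ref{minrelations} applies directly. It yields that $\{\theta^{e_{a,b}}\}\cup\{D,D'\}$ is a strong generating set for $\cS(V)^{\gs\gl_n[t]}$, and that every normally ordered polynomial relation among these generators and their derivatives is a consequence, in the sense of Lemma~\ref{idealrecon}, of the quantum correction $r(z)$ of the single classical relation. Here I would use that \eqref{arcmap} being an isomorphism forces the ideal of relations in $\cO((V\oplus V^*)_\infty)^{\text{SL}_n{}_\infty}$ to be generated as a differential ideal by the single relation with leading term $dd'-\det[\tau'(e_{a,b})]$, so that Lemma~\ref{idealrecon} has exactly one generating relation to propagate.

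The step I expect to be the main obstacle is confirming that the single hypersurface relation genuinely generates the relation ideal at the \emph{differential} (arc-space) level, not merely as an ordinary ideal in $\cO(V\oplus V^*)^{\text{SL}_n}$. This is precisely where the isomorphism \eqref{arcmap} is essential: a priori the arc space could acquire extra relations among the prolonged generators, but the cited result of \cite{LSS} (valid because the quotient is a complete intersection, in fact a hypersurface here) guarantees that no new differential relations appear beyond the prolongations $\partial^k r$ of the one classical relation. Once this is in hand, the passage to the quantum side via Lemma~\ref{idealrecon} is formal, and the strong generation is immediate from Lemma~\ref{reconlem}.
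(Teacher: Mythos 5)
Your proposal is correct and follows essentially the same route as the paper: the paper states this theorem without a separate proof precisely because the surrounding text establishes the two hypotheses of Theorem~\ref{minrelations} — the isomorphism \eqref{arcmap} via Theorem 4.8 of \cite{LSS} in the hypersurface case, and the surjectivity of \eqref{injgamm} via the observation that $\theta^{e_{a,b}}$, $D$, $D'$ lie in $\cS(V)^{\gs\gl_n[t]}$ (no double contractions) and project onto the classical generators. Your additional remark that \eqref{arcmap} guarantees the single classical relation generates the relation ideal of the arc space as a \emph{differential} ideal is exactly the content of the proof of Theorem~\ref{minrelations}, so nothing is missing.
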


In the case $m>n$, $(V\oplus V^*)/\!\!/ \text{SL}_n$ is not a complete intersection, so we cannot conclude that $\cS(V)^{\gs\gl_n[t]}$ is strongly generated by $\Theta'$ together with the $D_J$ and $D'_{J}$. However, in the case $n=2$, the representations $\mathbb{C}^2$ and $(\mathbb{C}^2)^*$ of $\text{SL}_2$ are isomorphic, and since \eqref{arcmap} is an isomorphism in this case, we can describe $\cS(V)^{\gs\gl_2[t]}$ for all $m$.

\begin{thm} \label{sl2caseii} For $n=2$ and $m\geq 2$, $\cS(V)^{\gs\gl_2[t]}$ is strongly generated as a vertex algebra by $\Theta'$ together with $\{D_J,D'_{J}\}$ where $J$ runs over all $2$-element subsets of $\{1,\dots, m\}$. All normally ordered polynomial relations among the generators and their derivatives come from Weyl's second fundamental theorem of invariant theory for the standard representation of $\text{SL}_2$, in the sense of Lemma \ref{idealrecon}.\end{thm}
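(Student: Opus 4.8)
The plan is to apply the machinery of Theorem \ref{minrelations} to the case $G = \text{SL}_2$ with $V = (\mathbb{C}^2)^{\oplus m}$. The two hypotheses I must verify are that \eqref{injgamm} and \eqref{arcmap} are isomorphisms. The isomorphism \eqref{arcmap} is granted: the excerpt states that for $G = \text{SL}_2$ and $V$ a sum of arbitrarily many copies of the standard representation, \eqref{arcmap} is an isomorphism (this is one of the special cases imported from \cite{LSS}). So the real work is first to identify a generating set for $\cO(V\oplus V^*)^{\text{SL}_2}$, then to exhibit vertex operators lifting these generators that actually lie in $\cS(V)^{\gs\gl_2[t]}$, thereby forcing \eqref{injgamm} to be surjective.

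For the classical invariants, since $\mathbb{C}^2 \cong (\mathbb{C}^2)^*$ as $\text{SL}_2$-modules, $V\oplus V^*$ is just $2m$ copies of the standard representation, and by Weyl's first fundamental theorem for $\text{SL}_2$ the invariant ring $\cO(V\oplus V^*)^{\text{SL}_2}$ is generated by the pairwise $2\times 2$ determinants of the coordinate vectors. Sorting these by type, the generators are exactly the $\beta\beta$-determinants $d_J$, the $\gamma\gamma$-determinants $d'_J$ for $2$-element subsets $J$, and the mixed $\beta\gamma$-determinants, which are precisely the generators $\tau'(e_{a,b})$ of $\cO(V\oplus V^*)^{\text{GL}_m}$ that give rise to $\Theta'$. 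Next I would lift each classical generator to a vertex operator: $d_J \mapsto D_J$, $d'_J\mapsto D'_J$, and the mixed determinants to the generators $\theta^{e_{a,b}}$ of $\Theta'$. The paragraph preceding the theorem already argues that $D_J$ and $D'_J$ lie in $\cS(V)^{\gs\gl_2[t]}$ (the relevant nonnegative circle products with the $\theta^\xi$ admit no double contractions), and $\Theta'\subset \cS(V)^{\gs\gl_2[t]}$ by construction. Hence every generator of the image $R$ of \eqref{injgamm} has a preimage in $\cS(V)^{\gs\gl_2[t]}$, so \eqref{injgamm} is surjective and therefore an isomorphism.

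With both \eqref{injgamm} and \eqref{arcmap} established as isomorphisms, Theorem \ref{minrelations} applies verbatim: the chosen generators of $\cO(V\oplus V^*)^{\text{SL}_2}$ correspond to a strong generating set of $\cS(V)^{\gs\gl_2[t]}$, namely $\Theta'$ together with the $D_J$ and $D'_J$, and every normally ordered polynomial relation among these generators and their derivatives is a consequence, in the sense of Lemma \ref{idealrecon}, of the quantum corrections of the classical relations. Those classical relations are exactly the content of Weyl's second fundamental theorem for the standard representation of $\text{SL}_2$ (the Pl\"ucker-type and straightening relations among the $2\times 2$ minors), which yields the final clause of the statement.

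The main obstacle is the passage from coregular or hypersurface cases to this genuinely higher-codimension situation: for $m > 2$ the quotient $(V\oplus V^*)/\!\!/\text{SL}_2$ is neither smooth nor a complete intersection, so Theorem \ref{jetthm} is unavailable and the surjectivity of \eqref{arcmap} cannot be deduced from the general arc-space results used in the earlier theorems. Everything hinges on the special input from \cite{LSS} that \eqref{arcmap} is nonetheless an isomorphism for all $m$ in the $\text{SL}_2$ standard-representation case; once that is invoked, the surjectivity of \eqref{injgamm} is the only remaining point, and it follows cleanly because each determinantal generator manifestly lifts into the commutant.
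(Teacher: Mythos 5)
Your proposal is correct and follows essentially the same route as the paper's proof: invoke the isomorphism \eqref{arcmap} for sums of the standard $\SL_2$-representation from \cite{LSS} (using $\mathbb{C}^2\cong(\mathbb{C}^2)^*$), identify the classical generators via Weyl's first fundamental theorem, observe that their lifts $\theta^{e_{a,b}}$, $D_J$, $D'_J$ lie in $\cS(V)^{\gs\gl_2[t]}$ so that \eqref{injgamm} is surjective, and conclude via Theorem \ref{minrelations} and Lemma \ref{idealrecon}. Your explicit discussion of why Theorem \ref{jetthm} fails for $m>2$ and why the special $\SL_2$ input is essential is a point the paper makes only in the paragraph preceding the theorem, but the mathematical content is identical.
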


\begin{proof} Since $\cD(V)^{\text{SL}_2}$ is generated by $\{\tau'(\eta)| \eta'\in \gg\gl_m\}$ together with the determinants $\{d_{J},d'_{J}\}$, $\text{gr}(\cS(V))^{\gs\gl_2[t]}$ is generated as a $\partial$-ring by the corresponding set, under the isomorphism $\text{gr}(\cD(V)) \cong \cO(V\oplus V^*)^{\text{SL}_2}$. Since the corresponding vertex operators $\theta^{\eta}$, $D_J$, and $D'_{J}$ all lie in $\cS(V)^{\gs\gl_2[t]}$, the claim follows. \end{proof}

Next, for $m<n$ we can use our description of $\cS(V)^{\gs\gl_n[t]}$ to find strong generators for $\cS(V)^{\gg\gl_n[t]}$. 

\begin{thm} \label{glncase} For $m=1$, $\cS(V)^{\gg\gl_n[t]} = \mathbb{C}$. For $2\leq m< n$, $\cS(V)^{\gg\gl_n[t]}  = \bar{\Theta}'$.\end{thm}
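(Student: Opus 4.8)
The plan is to obtain $\cS(V)^{\gg\gl_n[t]}$ from the description of $\cS(V)^{\gs\gl_n[t]}$ in Theorem \ref{slncasei} by imposing the additional invariance under the center. Write $\gg\gl_n = \gs\gl_n \oplus \mathfrak{z}$ with $\mathfrak{z} = \mathbb{C} I_n$ the center, so that at the level of loop algebras $\gg\gl_n[t] = \gs\gl_n[t] \oplus \mathfrak{z}[t]$, the two summands commuting. On the vertex algebra side this matches the decomposition $\Theta = \bar\Theta \otimes \cH$, where $\cH$ is the Heisenberg algebra generated by $e = -\theta^{I_n}_{\cS}$ (the image of the center under $\hat\tau$). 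Since a vertex operator quantum-commutes with a subalgebra precisely when it quantum-commutes with a generating set, I would first record
$$\cS(V)^{\gg\gl_n[t]} = \text{Com}(\Theta,\cS(V)) = \text{Com}(\bar\Theta,\cS(V)) \cap \text{Com}(\cH,\cS(V)).$$

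Next I would apply Theorem \ref{slncasei}: because $1 \le m < n$, the first intersectand is $\text{Com}(\bar\Theta,\cS(V)) = \cS(V)^{\gs\gl_n[t]} = \Theta'$. Hence $\cS(V)^{\gg\gl_n[t]}$ consists of those $v \in \Theta'$ with $e\circ_k v = 0$ for all $k \ge 0$; since $\cH \subset \Theta'$ and $\Theta'$ is closed under the circle products, this is exactly the internal commutant $\text{Com}(\cH,\Theta')$.

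The crux is then the identity $\text{Com}(\cH,\Theta') = \bar\Theta'$. Here I would invoke the tensor factorization $\Theta' = \bar\Theta' \otimes \cH$ recorded above, in which $\bar\Theta'$ and $\cH$ have vanishing mutual OPE, together with the triviality of the self-commutant of the nondegenerate rank-one Heisenberg algebra, $\text{Com}(\cH,\cH) = \mathbb{C}$. Writing $v \in \Theta'$ as $\sum_i a_i \otimes h_i$ with $a_i \in \bar\Theta'$ linearly independent and $h_i \in \cH$, and using $e\circ_k(a_i\otimes h_i) = a_i \otimes (e\circ_k h_i)$, the condition $e\circ_k v = 0$ forces $e\circ_k h_i = 0$ for every $i$ and $k\ge 0$, whence $h_i \in \mathbb{C}$ and $v \in \bar\Theta'$. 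The self-commutant statement itself follows from the Fock space model $\cH = \mathbb{C}[e(-1),e(-2),\dots]\mathbf{1}$: the mode $e(0)$ acts as zero and, since $e(z)e(w) \sim -mn(z-w)^{-2}$ with $-mn \neq 0$, the modes $e(k)$ for $k \ge 1$ are the nonzero annihilation operators, whose common kernel is $\mathbb{C}\mathbf{1}$.

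Reading off the two cases finishes the proof: for $2 \le m < n$ this yields $\cS(V)^{\gg\gl_n[t]} = \bar\Theta'$, while for $m = 1$ one has $\gs\gl_1 = 0$, so $\bar\Theta' = \mathbb{C}$ and $\Theta' = \cH$, and the same computation gives $\cS(V)^{\gg\gl_n[t]} = \text{Com}(\cH,\cH) = \mathbb{C}$. The one place demanding care is the self-commutant computation $\text{Com}(\cH,\cH) = \mathbb{C}$ together with the verification that the internal commutant inside $\Theta'$ agrees with the ambient circle products of $\cS(V)$; the remaining steps are bookkeeping with the splitting $\gg\gl_n = \gs\gl_n \oplus \mathfrak{z}$ and a direct appeal to Theorem \ref{slncasei}.
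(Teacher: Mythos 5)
Your proof is correct and takes essentially the same route as the paper: both arguments use the factorization $\Theta = \bar{\Theta}\otimes\cH$ to reduce $\cS(V)^{\gg\gl_n[t]} = \text{Com}(\bar{\Theta}\otimes\cH,\cS(V))$ to $\text{Com}(\cH,\Theta')$ via Theorem \ref{slncasei}, and then read off the answer from $\Theta' = \cH$ (for $m=1$) and $\Theta' = \bar{\Theta}'\otimes\cH$ (for $2\leq m<n$). The only difference is that you fill in the verifications $\text{Com}(\cH,\bar{\Theta}'\otimes\cH) = \bar{\Theta}'$ and $\text{Com}(\cH,\cH)=\mathbb{C}$ (via the Fock model), which the paper treats as standard facts.
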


\begin{proof} Since $\Theta = \bar{\Theta}\otimes \cH$, we have $$\cS(V)^{\gg\gl_n[t]} = \text{Com}(\bar{\Theta}\otimes \cH,\cS(V)) = \text{Com}( \cH, \text{Com}(\bar{\Theta},\cS(V))) = \text{Com}(\cH, \Theta') .$$ For $m=1$, $\Theta' = \cH$, so $\cS(V)^{\gg\gl_n[t]} = \text{Com}(\cH,\cH) =  \mathbb{C}$. For $m>1$, we have $\Theta' = \bar{\Theta}'\otimes \cH$, so $$\cS(V)^{\gg\gl_n[t]}= \text{Com}(\cH, \bar{\Theta}'\otimes \cH) = \bar{\Theta}'.$$  \end{proof}

Unfortunately, even in the case $n=2$, we cannot describe $\cS(V)^{\gg\gl_2[t]}$ for $m\geq 2$ using these methods. For all $n\geq 2$ and $m\geq n$, $\cS(V)^{\gg\gl_n[t]}$ is strictly larger than $\bar{\Theta}'$, and does not have a simple description in terms of the generators for $\cD(V)^{\text{GL}_n}$. To see this, let $\cA$ be the subalgebra of $\cS(V)^{\gs\gl_n[t]}$ of $\beta\gamma$-charge zero, which is just the subalgebra of $\cS(V)^{\gs\gl_n[t]}$ annihilated by the zero mode $e(0) = e\circ_0$. Since $\partial^k D_J$ and $\partial^k D'_J$ have eigenvalues $-n$ and $n$, respectively, under $e(0)$ for all $J$ and $k$, $\cA$ contains $\Theta'$ together with the collection $$\{:\partial^k D_J \partial^l D'_{J'}:|  k,l\geq 0\}.$$ Next, we claim that $\Theta'$ is a {\it proper} subalgebra of $\cA$. For $r\geq 2$, let $$\Theta'_{(r)} = \Theta'\cap \cS(V)_{(r)}, \qquad \cA_{(r)} = \cA\cap \cS(V)_{(r)}.$$ Since $\Theta' = \bar{\Theta}' \otimes \cH$, and $e$ generates $\cH$, it follows that for $k>0$, $e\circ_k$ maps $\Theta'_{(r)}$ into $\Theta'_{(r-2)}$. However, note that $:D_J \partial D'_{J'}: \in \cA_{(2n)}$ and $$e\circ_1 \big(:D_J \partial D'_{J'}:\big) =\  :D_J D'_{J'}: -n D_J \circ_0 D'_{J'},$$ which does not lie in $\cA_{(2n-2)}$ since $:D_J D'_{J'}:$ has degree $2n$, whereas $D_J \circ_0 D'_{J'} \in \cA_{(2n-2)}$.  Hence $:D_J \partial D'_{J'}:\notin \Theta'$, as claimed. Finally, $\cA$ and $\Theta'$ are both modules over $\cH$, and decompose as direct sums of $\cH$-modules of highest weight zero (i.e., $e(0)$ acts by zero). Since $\bar{\Theta}' = \text{Com}(\cH, \Theta')$ and $\cS(V)^{\gg\gl_n[t]} = \text{Com}(\cH,\cA)$, $\bar{\Theta}'$ must be a proper subalgebra of $\cS(V)^{\gg\gl_n[t]}$.

We now consider the double commutants $\text{Com}(\cS(V)^{\gg\gl_n[t]},\cS(V))$ and $\text{Com}(\cS(V)^{\gs\gl_n[t]},\cS(V))$. 

\begin{thm} \label{howegln} For $m>n$, $\text{Com}(\cS(V)^{\gg\gl_n[t]}, \cS(V)) = \Theta$, so $\Theta$ and $\cS(V)^{\gg\gl_n[t]}$ form a Howe pair inside $\cS(V)$. For $m < n$, $\Theta$ is not a member of a Howe pair.\end{thm}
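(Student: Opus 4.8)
The plan is to prove Theorem~\ref{howegln} in two parts, establishing the Howe pair property for $m>n$ and its failure for $m<n$. For the first part, I would compute both double commutants directly. Recall that $\text{Com}(\Theta,\cS(V)) = \cS(V)^{\gg\gl_n[t]}$ by definition, so by the triple-commutant identity mentioned after the definition of Howe pairs, $\Theta$ is a member of a Howe pair precisely when $\Theta = \text{Com}(\cS(V)^{\gg\gl_n[t]},\cS(V))$. First I would observe that $\Theta \subseteq \text{Com}(\cS(V)^{\gg\gl_n[t]},\cS(V))$ automatically, since the double commutant always contains the original algebra. The real content is the reverse inclusion. To get this, I would pass to the associated graded level via the good filtration of \S3. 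Using the isomorphism $\text{gr}(\cS(V)) \cong \cO((V\oplus V^*)_\infty)$ and the fact that taking $\gr$ of a commutant injects into the invariants of $\gr$, I would reduce the vertex-algebra computation of $\text{Com}(\cS(V)^{\gg\gl_n[t]},\cS(V))$ to a classical computation of the commutant of an invariant ring inside $\cO((V\oplus V^*)_\infty)$, or equivalently to a double-commutant statement in $\cD(V)$ via the Zhu functor.

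The cleanest route is through the Zhu functor and classical Howe duality. By Theorem~\ref{zhudescription}, when \eqref{injgamm} and \eqref{arcmap} are isomorphisms (which holds here for the relevant cases), $A(\cS(V)^{\gg\gl_n[t]}) \cong \cD(V)^{\text{GL}_n}$, and the diagram \eqref{cdgencase} lets me compare the vertex-algebra double commutant with the associative-algebra double commutant $\text{Com}(\cD(V)^{\text{GL}_n}, \cD(V))$. By classical $\text{GL}_n$-$\text{GL}_m$ Howe duality recalled at the start of this section, $\cD(V)^{\text{GL}_n} = \tau'(U(\gg\gl_m))$ and $\text{Com}(\cD(V)^{\text{GL}_n},\cD(V)) = \cD(V)^{\text{GL}_m} = \tau(U(\gg\gl_n))$, which is the Zhu image of $\Theta$. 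So at the Zhu level the double commutant is exactly right; the task is to lift this to the vertex algebra, showing no extra elements appear in higher weight. I would do this by a filtered/degree induction: an element $v$ of $\text{Com}(\cS(V)^{\gg\gl_n[t]},\cS(V))$ of degree $d$ has a leading symbol $\phi_d(v)$ lying in the classical commutant inside $\text{gr}(\cS(V))$, which by the arc-space version of Howe duality must be the symbol of an element of $\Theta$; subtracting that element lowers the degree and the induction closes.

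For the second part, the failure of the Howe pair property when $m<n$, I would exploit the computations already carried out in this section. By Theorem~\ref{slncasei}, for $m<n$ we have $\cS(V)^{\gs\gl_n[t]} = \Theta'$, and I would combine this with the structural observation that $\cS(V)^{\gg\gl_n[t]}$ was shown above to be \emph{strictly larger} than $\bar\Theta'$. The key point is that $\text{Com}(\cS(V)^{\gg\gl_n[t]},\cS(V))$, being a commutant, equals $\text{Com}(\text{Com}(\text{Com}(\Theta,\cS(V)),\cS(V)),\cS(V))$, which collapses; I would show that this double commutant is strictly larger than $\Theta$ by producing an explicit element commuting with all of $\cS(V)^{\gg\gl_n[t]}$ that does not lie in $\Theta$. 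Since for $m<n$ the determinant elements $D_J,D'_J$ do not exist (there are not enough columns) yet the classical invariant theory forces $\cD(V)^{\text{SL}_n}=\tau'(U(\gg\gl_m))$, the asymmetry between the $m<n$ and $m>n$ regimes is exactly what breaks the duality: the chain of commutants stabilizes at $\Theta'$-type algebras rather than returning to $\Theta$. Concretely, $\Theta \subsetneq \text{Com}(\cS(V)^{\gg\gl_n[t]},\cS(V))$ because the latter contains $\text{Com}(\bar\Theta',\cS(V))$-type elements not captured by the $\gg\gl_n$-currents alone.

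The main obstacle I anticipate is the degree-induction step in the first part: lifting the classical Howe duality statement from $\gr$ (or from the Zhu algebra) back to the full vertex algebra. The danger is that the injection $\gr(\text{Com}(\cdots)) \hookrightarrow \gr(\cdots)^{\gg\gl_n[t]}$ need not be surjective in general, so I must verify that for the commutant of $\cS(V)^{\gg\gl_n[t]}$ the relevant comparison map is an isomorphism in the $m>n$ regime. This likely requires knowing that $V\oplus V^*$ is a complete intersection (or that \eqref{arcmap} is an isomorphism) for the $\text{GL}_n$ action with $m>n$, and then invoking the arc-space analogue of the first fundamental theorem to control the invariants of $\gr$ at every weight. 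Establishing that the leading symbols of double-commutant elements are genuinely classical-commutant elements — rather than merely invariants — is where the argument must be most careful.
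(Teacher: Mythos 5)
Your proposal has genuine gaps in both halves. For $m>n$, your route runs through Theorem \ref{zhudescription} and classical Howe duality at the Zhu level, but Theorem \ref{zhudescription} requires \eqref{injgamm} and \eqref{arcmap} to be isomorphisms for $G=\text{GL}_n$ acting on $V\oplus V^*$ with $m>n$, and this is exactly what is \emph{not} available: the paper never establishes either isomorphism in this regime, and in fact shows (via the elements $:D_J\partial D'_{J'}:$) that $\cS(V)^{\gg\gl_n[t]}$ is strictly larger than $\bar{\Theta}'$ and admits no simple description in terms of the generators of $\cD(V)^{\text{GL}_n}$. Moreover $(V\oplus V^*)/\!\!/\text{GL}_n$ is a determinantal variety which is not a complete intersection once $m>n+1$, so the hypothesis you yourself flag as needed (``that $V\oplus V^*$ is a complete intersection\dots for the $\text{GL}_n$ action with $m>n$'') actually fails, and neither Theorem \ref{jetthm} nor the complete-intersection results of \cite{LSS} apply. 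The paper's proof sidesteps every one of these issues with a sandwich argument that never touches $\cS(V)^{\gg\gl_n[t]}$ itself: since $\bar{\Theta}'\subset \cS(V)^{\gg\gl_n[t]}$, monotonicity of commutants gives
\begin{equation*}
\Theta\ \subset\ \text{Com}(\cS(V)^{\gg\gl_n[t]},\cS(V))\ \subset\ \text{Com}(\bar{\Theta}',\cS(V)) = \cS(V)^{\gs\gl_m[t]},
\end{equation*}
and Theorem \ref{slncasei} applied to the \emph{right} action of $\gs\gl_m$ (where the number of copies of the standard representation is $n<m$, the coregular case) yields $\cS(V)^{\gs\gl_m[t]}=\Theta$, collapsing the chain. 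Your degree-induction lift of Zhu-level Howe duality is thus both unnecessary and, with the tools in the paper, unavailable.

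For $m<n$ you have the two regimes interchanged: by Theorem \ref{glncase}, for $2\leq m<n$ one has the \emph{equality} $\cS(V)^{\gg\gl_n[t]}=\bar{\Theta}'$; the strict containment $\bar{\Theta}'\subsetneq \cS(V)^{\gg\gl_n[t]}$ proved in the paper holds for $m\geq n$, since its proof needs the column determinants $D_J$, which require $m\geq n$. Granting the correct equality, $\text{Com}(\cS(V)^{\gg\gl_n[t]},\cS(V))=\text{Com}(\bar{\Theta}',\cS(V))=\cS(V)^{\gs\gl_m[t]}$, and the explicit witnesses that this strictly contains $\Theta$ --- which your sketch promises to produce but never does --- are the $m\times m$ \emph{row} determinants $D_I$, $D'_I$ indexed by $I=\{i_1<\cdots<i_m\}\subset\{1,\dots,n\}$, built from $\beta^{x_{i,j}}$'s (resp.\ $\gamma^{x'_{i,j}}$'s) alone. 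These lie in $\cS(V)^{\gs\gl_m[t]}$ but have $\beta\gamma$-charge $-m$ and $m$, whereas $\Theta$ is concentrated in charge zero. Your parenthetical claim that for $m<n$ ``the determinant elements do not exist (there are not enough columns)'' overlooks precisely these row determinants, which are the whole point of the failure of the Howe pair property. You also omit the case $m=1$, where $\cS(V)^{\gg\gl_n[t]}=\mathbb{C}$ and the double commutant is all of $\cS(V)$.
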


\begin{proof} First, let $m>n$. Even though we do not have a description of $\cS(V)^{\gg\gl_n[t]}$ in this case, we have $\bar{ \Theta}' \subset \cS(V)^{\gg\gl_n[t]}$, so 
\begin{equation}\label{inclusions} \Theta\subset \text{Com}(\cS(V)^{\gg\gl_n[t]},\cS(V)) \subset \text{Com}(\bar{\Theta}',\cS(V)) = \cS(V)^{\gs\gl_m[t]}.\end{equation} By Theorem \ref{slncasei} (applied now to the right action of $\gs\gl_m$), we have $\cS(V)^{\gs\gl_m[t]} = \Theta$. Hence the inclusions in (\ref{inclusions}) are equalities, so $\Theta$ and $\cS(V)^{\gg\gl_n[t]}$ form a Howe pair.

Next, suppose that $m <n$. If $m=1$, $\text{Com}(\cS(V)^{\gg\gl_n[t]},\cS(V)) = \text{Com}(\C,\cS(V)) = \cS(V)$, which is larger than $\Theta$. If $2\leq m < n$, $\cS(V)^{\gg\gl_n[t]} = \bar{\Theta}'$, but $\text{Com}(\cS(V)^{\gg\gl_n[t]},\cS(V)) = \cS(V)^{\gs\gl_m[t]}$ contains $\Theta$ together with vertex operators $$D_{I} = \det \left[\begin{matrix} \beta^{x_{i_1,1}} & \cdots &  \beta^{x_{i_1,m}} \cr  \vdots  & & \vdots  \cr   \beta^{x_{i_m,1}}  & \cdots &  \beta^{x_{i_m,m}} \end{matrix} \right], \qquad D'_{I} = \det \left[\begin{matrix} \gamma^{x'_{i_1,1}} & \cdots & \gamma^{x'_{i_1,m}} \cr  \vdots  & & \vdots  \cr  \gamma^{x'_{i_m,1}}  & \cdots & \gamma^{x'_{i_m,m}} \end{matrix} \right],$$ where $I = \{i_1,\dots, i_m\}\subset \{1,\dots,n\}$ satisfies $1\leq i_1< \cdots < i_m\leq n$. So $\text{Com}(\cS(V)^{\gg\gl_n[t]},\cS(V))$ is larger than $\Theta$, since $\Theta$ is homogeneous of $\beta\gamma$-charge zero, but $D_I$ and $D'_I$ have $\beta\gamma$-charges $-m$ and $m$, respectively. \end{proof}

\begin{thm} \label{howesln} For $m>n$, $\text{Com}(\cS(V)^{\gs\gl_n[t]}, \cS(V)) = \bar{\Theta}$, so $\bar{\Theta}$ and $\cS(V)^{\gs\gl_n[t]}$ form a Howe pair. For $2\leq m <n$, $\bar{\Theta}$ is not a member of a Howe pair. For $m=1$, $\bar{\Theta}$ is a member of a Howe pair for $n\geq 3$, but not for $n=2$. \end{thm}

\begin{proof} For $m>n$, $\Theta'  \subset \cS(V)^{\gs\gl_n[t]}$ and $\text{Com}(\Theta', \cS(V)) = \cS(V)^{\gg\gl_m[t]} = \bar{\Theta}$, so the claim follows. If $2 \leq m <n$, $\text{Com}(\cS(V)^{\gs\gl_n[t]}, \cS(V)) = \cS(V)^{\gg\gl_m[t]}$, which is larger than $\bar{\Theta}$ for the same reason that $\cS(V)^{\gg\gl_n[t]}$ is larger than $\bar{\Theta}'$ for $m>n$. For $m=1$, $\cS(V)^{\gs\gl_n[t]} = \cH$, so we can compute $\text{Com}(\cS(V)^{\gs\gl_n[t]}, \cS(V)) = \text{Com}(\cH, \cS(V))$ using Theorem 7.3 of \cite{L}. In particular, $\text{Com}(\cH, \cS(V))$ contains $n$ commuting copies of the (simple) Zamolodchikov $\cW_3$ algebra with central charge $c=-2$. For $n=2$, a calculation shows that these copies of $\cW_3$ do not lie in $\bar{\Theta}$, so $\text{Com}(\cH, \cS(V))$ is larger than $\bar{\Theta}$. For $n\geq 3$, the fact that $\bar{\Theta}$ is a member of a Howe pair follows from Theorem 3.1 of \cite{AP}. \end{proof}

\section{The case $G = \text{SO}_n$}
For $n\geq 3$, let $G = \text{SO}_n$ and let
$V$ be the direct sum of $m$ copies of $\mathbb{C}^n$, with basis $\{x_{1,j},\dots, x_{n,j}|\ j=1,\dots,m\}$. The action of $\text{SO}_n$ on $V$ induces an action of $\gs\go_n$ on $\cD(V)$ and an algebra homomorphism $\tau: U(\gs\go_n) \ra \cD(V)$. There is a well-known Lie algebra homomorphism $\tau':\gs\gp_{2m}\ra \cD(V)$, which appears on p. 219-220 of \cite{GW}. First, $\gs\gp_{2m}$ is the subset of $\gg\gl_{2m}$ consisting of block matrices of the form $$\bigg[ \begin{matrix}  A & B \cr  C & -A^T \end{matrix} \bigg], \qquad A,B,C\in \gg\gl_m, \qquad B=B^T, \qquad C = C^T.$$ In terms of the basis $\{e_{i,j}| 1\leq i\leq 2m, \ 1\leq j\leq 2m\}$ for $\gg\gl_{2m}$, a standard basis for $\gs\gp_{2m}$ consists of $$e_{j,k+m} + e_{k,j+m}, \qquad  -e_{j+m,k} - e_{k+m,j}, \qquad e_{j,k} - e_{m+k, m+j}, \qquad 1\leq j,k\leq m.$$ Define the following elements of $\cD(V)$ $$ M_{j,k} =  \sum_{i=1}^n x'_{i,j} x'_{i,k}, \qquad \Delta_{j,k} = \sum_{i=1}^n \frac{\partial}{\partial x'_{i,j}} \frac{\partial}{\partial x'_{i,k}}, \qquad  E_{j,k} =  \sum_{i=1}^n x'_{i,j} \frac{\partial}{\partial x'_{i,k}},$$ and define the homomorphism $\tau':\gs\gp_{2m}\ra \cD(V)$ by
\begin{equation}\label{explicitson} e_{j,k+m} + e_{k,j+m} \mapsto M_{j,k} , \qquad  -e_{j+m,k} - e_{k+m,j} \mapsto \Delta_{j,k}, \qquad  e_{j,k} - e_{m+k, m+j} \mapsto E_{j,k} + \frac{n}{2}\delta_{j,k},\end{equation} which extends to an algebra homomorphism $\tau': U(\gs\gp_{2m}) \ra \cD(V)$. We obtain an action $\gs\gp_{2m}\ra \text{End}(\cD(V))$ by derivations of degree zero, where $\eta\in \gs\gp_{2m}$ acts by $[\tau'(\eta),-]$. As an $\gs\gp_{2m}$-module, $\text{gr}(\cD(V))\cong \text{Sym}\big(\bigoplus_{i=1}^n U_i\big)$, where $U_i$ is the copy of the standard $\gs\gp_{2m}$-module $\mathbb{C}^{2m}$ with basis $\{x'_{i,1},\dots, x'_{i,m}, \frac{\partial}{\partial x'_{i,1}},\dots, \frac{\partial}{\partial x'_{i,m}}\}$. For $1\leq m< \frac{n}{2}$, $\cD(V)^{\text{SO}_n} = \tau'(U(\gs\gp_{2m}))$, and for $m\geq \frac{n}{2}$, $\cD(V)^{\text{SO}_n}$ is generated by $\tau'(U(\gs\gp_{2m}))$ together with the $n\times n$ determinants $d_J$. Here $J = \{j_1,\dots, j_n\}$ is a set of indices in $\{1,\dots, m\}$, and each index corresponds to either the linear functions or the differential operators. For any $m\geq 1$, $\cD(V)^{\gs\gp_{2m}} = \tau(U(\gs\go_n))$, so $\cD(V)^{\text{SO}_n}$ and $ \tau(U(\gs\go_n))$ form a pair of mutual commutants inside $\cD(V)$. This is known as $\text{SO}_n$ - $\gs\gp_{2m}$ Howe duality.

The vertex algebra analogues of $\tau$ and $\tau'$ are 
\begin{equation}\label{hattauva}\hat{\tau}: V_{-2m}(\gs\go_n) \ra \cS(V), \qquad \hat{\tau}': V_{-\frac{n}{2}}(\gs\gp_{2m}) \ra \cS(V).\end{equation} The map $\hat{\tau}$ is given by (\ref{deftheta}), and $\hat{\tau}'$ is given by $$ X^{e_{j,k+m} + e_{k,j+m}} \mapsto \sum_{i=1}^n : \gamma^{x'_{i,j}} \gamma^{x'_{i,k}}:, \qquad X^{-e_{j+m,k} - e_{k+m,j}} \mapsto \sum_{i=1}^n :\beta^{x_{i,j}}\beta^{x_{i,k}}:,$$ $$ X^{e_{j,k} - e_{m+k, m+j}} \mapsto \sum_{i=1}^n :\gamma^{x'_{i,j}} \beta^{x'_{i,k}}:. $$ As usual, let $\Theta = \hat{\tau}(V_{-2m}(\gs\go_n))$ and $\Theta' = \hat{\tau}'( V_{-\frac{n}{2}}(\gs\gp_{2m}))$. 

\begin{thm}\label{howeson} For $m<\frac{n}{2}$, we have $\cS(V)^{\gs\go_n[t]} = \Theta'$, and $\Theta' \cong V_{-\frac{n}{2}}(\gs\gp_{2m}))$. For $m \geq  \frac{n}{2}-1$, $\Theta$ and $\cS(V)^{\gs\go_n[t]}$ form a Howe pair inside $\cS(V)$.\end{thm}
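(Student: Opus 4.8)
The plan is to handle the two assertions separately, using the same machinery that powered Theorem \ref{slncasei} and Theorem \ref{howegln}. For the first assertion, when $m < \frac{n}{2}$, I would invoke classical $\text{SO}_n$--$\gs\gp_{2m}$ Howe duality, which gives $\cD(V)^{\text{SO}_n} = \tau'(U(\gs\gp_{2m}))$, together with Weyl's second fundamental theorem for $\text{SO}_n$. The point is that in this range $V \oplus V^*$ is coregular: the generators of $\cO(V\oplus V^*)^{\text{SO}_n}$ are exactly the quadratic invariants $M_{j,k}$, $\Delta_{j,k}$, $E_{j,k}$, and for $m < \frac{n}{2}$ these satisfy no relations. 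I would then check that the remaining hypotheses of Theorem \ref{jetthm} hold (in particular that $\cO(V\oplus V^*)$ contains no nontrivial one-dimensional $\text{SO}_n$-invariant subspaces, which follows since $\text{SO}_n$ is semisimple), so that \eqref{arcmap} is an isomorphism. Because the vertex operators generating $\Theta'$ are of the form \eqref{deftheta}, they lie in $\cS(V)^{\gs\go_n[t]}$, so \eqref{injgamm} is surjective, hence an isomorphism; Theorem \ref{minrelations} then yields $\cS(V)^{\gs\go_n[t]} = \Theta'$. The absence of classical relations also forces $\hat\tau'$ to be injective, giving $\Theta' \cong V_{-\frac{n}{2}}(\gs\gp_{2m})$.

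For the Howe-pair assertion, I would imitate the proof of Theorem \ref{howegln}. The natural strategy is a sandwiching argument: since $\Theta' \subset \cS(V)^{\gs\go_n[t]}$, applying $\text{Com}(-,\cS(V))$ reverses inclusions and gives
\begin{equation}
\Theta \subset \text{Com}(\cS(V)^{\gs\go_n[t]}, \cS(V)) \subset \text{Com}(\Theta', \cS(V)) = \cS(V)^{\gs\gp_{2m}[t]}.
\end{equation}
The goal is then to show that the outer two terms coincide, i.e.\ that $\cS(V)^{\gs\gp_{2m}[t]} = \Theta$. This is precisely the first assertion of the theorem with the roles of the two dual pairs interchanged: $\text{SO}_n$ plays the role of the small group and $\gs\gp_{2m}$ the large one. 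I would therefore apply the already-established first statement to the $\gs\gp_{2m}$-action, which computes $\cS(V)^{\gs\gp_{2m}[t]} = \Theta$ exactly when $n < \frac{(2m)}{2}$-type inequality holds, i.e.\ in a complementary range. Translating the coregularity condition for the $\gs\gp_{2m}$-side into an inequality on $m$ and $n$ is what produces the stated bound $m \geq \frac{n}{2}-1$.

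The main obstacle, and the step I would treat most carefully, is matching the numerical ranges across the two sides of the duality. The first assertion requires strict inequality ($m < \frac{n}{2}$), whereas the Howe-pair bound is $m \geq \frac{n}{2}-1$, so there is a genuine question about whether the intermediate values of $m$ (near $\frac{n}{2}$) are covered by coregularity of the $\gs\gp_{2m}$-action on $V\oplus V^*$ or instead require the complete-intersection results of \cite{LSS} quoted after Theorem \ref{jetthm}. Concretely, I would verify that for $m \geq \frac{n}{2}-1$ the relevant invariant-theoretic quotient for the $\gs\gp_{2m}$-action is coregular (or at worst a complete intersection covered by the examples listed in \cite{LSS}), so that \eqref{arcmap} is an isomorphism on that side; this is exactly what lets the sandwich collapse. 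Once the range-matching is confirmed, the two inclusions in the display are forced to be equalities and $\Theta$ together with $\cS(V)^{\gs\go_n[t]}$ form a Howe pair, giving the vertex-algebra analogue of classical $\text{SO}_n$--$\gs\gp_{2m}$ duality.
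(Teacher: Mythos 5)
Your proposal is correct and follows essentially the same route as the paper: the first claim via coregularity of $V\oplus V^*$, Theorem \ref{jetthm}, and Theorem \ref{minrelations}, and the Howe pair via the sandwich $\Theta \subset \text{Com}(\cS(V)^{\gs\go_n[t]},\cS(V)) \subset \text{Com}(\Theta',\cS(V)) = \cS(V)^{\gs\gp_{2m}[t]}$, collapsed by proving $\cS(V)^{\gs\gp_{2m}[t]} = \Theta$ exactly as you outline --- the paper checks that $V\oplus V^*\cong \bigoplus_{i=1}^n U_i$ ($n$ copies of the standard $\text{Sp}_{2m}$-module) is coregular for $m\geq \frac{n-1}{2}$ and a hypersurface for $m=\frac{n}{2}-1$ (Theorem 4.5 of \cite{LSS}), so the arc-space map is an isomorphism throughout the range $m\geq \frac{n}{2}-1$. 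Two small inaccuracies to repair: the generators of $\Theta'$ are the explicit quadratics of \eqref{hattauva}, not vertex operators of the form \eqref{deftheta}, so the inclusion $\Theta'\subset \cS(V)^{\gs\go_n[t]}$ requires the OPE calculation the paper invokes; and the collapse step is not literally an application of the theorem's first statement (which concerns $\gs\go_n$ acting on copies of $\mathbb{C}^n$) but the parallel argument for $\gs\gp_{2m}$ acting on $n$ copies of $\mathbb{C}^{2m}$, which is precisely what your proposed range-matching verification amounts to.
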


\begin{proof} For $m<\frac{n}{2}$, $V\oplus V^*$ is coregular, so the generators of $\text{gr}(\cS(V))^{\gs\go_n[t]}$ as a $\partial$-ring correspond to the generators of $\cO(V\oplus V^*)^{\text{SO}_n}$. By Weyl's first fundamental theorem of invariant theory for $\text{SO}_n$, these generators are quadratic and correspond to the generators of $\Theta'$. An OPE calculation shows that $\Theta'\subset \cS(V)^{\gs\go_n[t]}$. It follows that the map \eqref{injgamm} is surjective, so $\cS(V)^{\gs\go_n[t]}= \Theta'$. The second statement is immediate from Weyl's second fundamental theorem of invariant theory for $\text{SO}_n$.

For $m \geq \frac{n-1}{2}$, $\bigoplus_{i=1}^n U_i$ is coregular as an $\Sp_{2m}$-module, and when $n$ is even and $m = \frac{n}{2}-1$, $(\bigoplus_{i=1}^n U_i)/\!\!/ \text{Sp}_{2m}$ is a hypersurface. By Theorem \ref{jetthm} in the case $m \geq \frac{n-1}{2}$, and by Theorem 4.5 of \cite{LSS} in the case $m = \frac{n}{2}-1$, the map $$p^*_{\infty}: \cO(((\bigoplus_{i=1}^n U_i)/\!\!/  \text{Sp}_{2m})_{\infty}) \ra \cO((\bigoplus_{i=1}^n U_i)_{\infty})^{(\text{Sp}_{2m})_{\infty}}$$ is an isomorphism. Therefore the generators of $\text{gr}(\cS(V))^{\gs\gp_{2m}[t]}$ correspond to the generators of $\cO(\bigoplus_{i=1}^n U_i)^{\text{Sp}_{2m}}$.

By Weyl's first fundamental theorem of invariant theory for $\text{Sp}_{2m}$ these generators are quadratic and correspond to the generators of $\Theta$. Since $\Theta\subset \cS(V)^{\gs\gp_{2m}[t]} =  \text{Com}(\Theta',\cS(V))$, the map $\text{gr}(\cS(V)^{\gs\gp_{2m}[t]})\hookrightarrow \text{gr}(\cS(V))^{\gs\gp_{2m}[t]}$ is surjective, and $\cS(V)^{\gs\gp_{2m}[t]} = \Theta$. Finally, since $\Theta'\subset \cS(V)^{\gs\go_n[t]}$, it follows that $\text{Com}(\cS(V)^{\gs\go_n[t]}, \cS(V)) = \text{Com}(\Theta',\cS(V)) = \Theta$. \end{proof}

\begin{remark} Since $\gs\gl_2\cong \gs\go_3$ as complex Lie algebras, and the adjoint representation of $\gs\gl_2$ coincides with the standard representation of $\gs\go_3$, we recover the main result (Theorem 1.3) of \cite{LL} by taking $n=3$ and $m=1$ in the preceding theorem.\end{remark}

For $n$ even and $m =\frac{n}{2}$, $(V\oplus V^*)/\!\!/ \text{SO}_n$ is a hypersurface, and \eqref{arcmap} is an isomorphism by Theorem 4.7 of \cite{LSS}. However, we cannot use this result to reconstruct $\cS(V)^{\gs\go_n[t]}$ because \eqref{injgamm} is not surjective. For example, in the case $n=4$ and $m=2$, consider the $4\times 4$ determinant $d$ corresponding to the modules $W_1, W^*_1$, $W_2$, $W^*_2$ which is a second-order differential operator of degree four in the Bernstein filtration. Let $D\in \cS(V)$ be any vertex operator obtained from $d$ by replacing $x'_{i,j}$ and $\frac{\partial}{\partial x'_{i,j}}$ with $\gamma^{x'_{i,j}}$ and $\beta^{x_{i,j}}$, respectively, and by replacing ordinary products with Wick products. A calculation shows that for any vertex operator $D'\in \cS(V)$ of lower degree, $D + D'\notin \cS(V)^{\gs\go_4[t]}$, so \eqref{injgamm} is not surjective.

\section{The case $G = \text{Sp}_{2n}$}
For $n\geq 2$, let $G = \text{Sp}_{2n}$ and let $V$ be the direct sum of $m$ copies of  $\mathbb{C}^{2n}$, with basis $\{x_{1,j},\dots, x_{2n,j}|\ j=1,\dots,m\}$. The action of $\text{Sp}_{2n}$ on $V$ induces an action of $\gs\gp_{2n}$ on $\cD(V)$, and an algebra homomorphism
$\tau: U( \gs\gp_{2n}) \ra \cD(V)$. If $m\geq 2$, there is a Lie algebra homomorphism $\tau': \gs\go_{2m}\ra \cD(V)$, which appears on p. 221-222 of \cite{GW}. We regard $\gs\go_{2m}$ as the subset of $\gg\gl_{2m}$ consisting of block matrices of the form $$\bigg[ \begin{matrix}  A & B \cr C & -A^T\end{matrix} \bigg], \qquad A,B,C\in \gg\gl_m, \qquad B=-B^T, \qquad C = -C^T,$$ with basis $$e_{j,k+m} - e_{k,j+m}, \qquad e_{j+m,k} - e_{k+m,j}, \qquad e_{j,k} - e_{m+k, m+j}, \qquad 1\leq j,k\leq m.$$ Define the following elements of $\cD(V)$
$$D_{j,k} = \sum_{i=1}^n \frac{\partial}{\partial x'_{i,j}} \frac{\partial}{\partial x'_{i+n,k}} - \frac{\partial}{\partial x'_{i+n,j}} \frac{\partial}{\partial x'_{i,k}}, \qquad S_{j,k}= \sum_{i=1}^n x'_{i,j}x'_{i+n,k}-x'_{i+n,j}x'_{i,k},$$ $$E_{j,k} = \sum_{i=1}^{2n} x'_{i,j} \frac{\partial}{\partial x'_{i,k}},$$ and define $\tau':\gs\go_{2m}\ra \cD(V)$ by \begin{equation}\label{explicitsp} e_{j,k+m} - e_{k,j+m} \mapsto S_{j,k}, \qquad e_{j+m,k} - e_{k+m,j} \mapsto D_{j,k}, \qquad e_{j,k} - e_{m+k, m+j} \mapsto E_{j,k} + n\delta_{j,k},\end{equation} which extends to an algebra homomorphism $\tau': U(\gs\go_{2m}) \ra \cD(V)$. This induces an action $\gs\go_{2m}\ra \text{End}(\cD(V))$ by derivations of degree zero, where $\eta\in \gs\go_{2m}$ acts by $[\tau'(\eta),-]$. As an $\gs\go_{2m}$-module, $\text{gr}(\cD(V))\cong \text{Sym} \big(\bigoplus_{i=1}^{2n} U_i \big)$, where $U_i$ is a copy of the standard $\gs\go_{2m}$-module $\mathbb{C}^{2m}$ with basis $\{x'_{i,1},\dots, x'_{i,m}, \frac{\partial}{\partial x'_{i,1}},\dots, \frac{\partial}{\partial x'_{i,m}}\}$. If $m=1$, $\cD(V)^{\text{Sp}_{2n}} = \mathbb{C}[e]$ where $e = \sum_{i=1}^{2n} x'_i \frac{\partial}{\partial x'_i}$, and if $m\geq 2$, $\cD(V)^{\text{Sp}_{2n}} = \tau'(U(\gs\go_{2m}))$. As for the double commutant $\text{Com}(\cD(V)^{\text{Sp}_{2n}},\cD(V))$, we have the following three cases, by $\text{Sp}_{2n} - \gs\go_{2m}$ Howe duality:

\begin{itemize}
\item If $m=1$, $\text{Com}(\cD(V)^{\text{Sp}_{2n}}, \cD(V)) = \tilde{\tau}(U(\gg\gl_{2n}))$ where $\tilde{\tau}$ is the extension of $\tau$ to $U(\gg\gl_{2n})$.

\item If $2\leq m\leq n$, $\text{Com}(\cD(V)^{\text{Sp}_{2n}}, \cD(V)) = \cD(V)^{\gs\go_{2m}}$, which is generated by $\tau(U(\gs\gp_{2n}))$ together with all $2m \times 2m$ determinants $d_I$, where $I$ corresponds to a choice of $2m$ distinct modules from the collection $\{U_i | i=1,\dots, 2n\}$. 

\item If $m>n$, $\text{Com}(\cD(V)^{\text{Sp}_{2n}}, \cD(V)) = \cD(V)^{\gs\go_{2m}} = \tau(U(\gs\gp_{2n}))$, so that $\cD(V)^{\text{Sp}_{2n}}$ and $\tau(U(\gs\gp_{2n}))$ form a pair of mutual commutants inside $\cD(V)$.
\end{itemize}

The vertex algebra analogues of $\tau$ and $\tau'$ are 
$$\hat{\tau}: V_{-m}(\gs\gp_{2n}) \ra \cS(V), \qquad \hat{\tau}': V_{-n}(\gs\go_{2m}) \ra \cS(V).$$ Here $\hat{\tau}$ is given by (\ref{deftheta}), and $\hat{\tau}'$ is given by
$$X^{e_{j,k+m} - e_{k,j+m}}\mapsto \sum_{i=1}^n :\gamma^{x'_{i,j}}\gamma^{x'_{i+n,k}}:-:\gamma^{x'_{i+n,j}}\gamma^{x'_{i,k}}:,$$ $$   X^{e_{j+m,k} - e_{k+m,j}}\mapsto \sum_{i=1}^n :\beta^{x_{i,j}}\beta^{x_{i+n,k}}:-:\beta^{x_{i+n,j}}\beta^{x_{i,k}}:,$$ $$X^{e_{j,k} - e_{m+k, m+j}} \mapsto \sum_{i=1}^{2n} :\gamma^{x'_{i,j}} \beta^{x'_{i,k}}:.$$ As usual, let $\Theta = \hat{\tau}(V_{-m}(\gs\gp_{2n}))$ and $\Theta' = \hat{\tau}'(V_{-n}(\gs\go_{2m}))$. 

\begin{thm} For $m=1$, $\cS(V)^{\gs\gp_{2n}[t]} = \cH$, where $\cH$ is the copy of the Heisenberg algebra generated by $e= \sum_{i=1}^{2n} : \beta^{x_i}\gamma^{x'_i}:$. For $2\leq m \leq n+1$, $\cS(V)^{\gs\gp_{2n}[t]}  = \Theta'$. Finally, for $m>n$, $\text{Com}(\cS(V)^{\gs\gp_{2n}[t]} ,\cS(V)) = \cS(V)^{\gs\go_{2m}[t]} = \Theta$, so $\Theta$ is a member of a Howe pair. \end{thm}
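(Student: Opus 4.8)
The plan is to treat all three statements by the single mechanism used for $\text{SO}_n$ in Theorem \ref{howeson}: realize $V\oplus V^*$ as a sum of copies of the standard module of the relevant classical group, use Weyl's first and second fundamental theorems to determine the generators and relations of the classical invariant ring, invoke Theorem \ref{jetthm} or a hypersurface result of \cite{LSS} to make \eqref{arcmap} an isomorphism, and then verify by an OPE computation that the corresponding quadratic vertex operators lie in the appropriate commutant. Surjectivity of \eqref{injgamm} then follows, and Lemma \ref{reconlem} converts the classical generators into a strong generating set for the commutant.

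I would dispose of the cases $m\le n+1$ first, where $V\oplus V^*$ is $2m$ copies of the standard symplectic module $\mathbb{C}^{2n}$ (using $V\cong V^*$ as an $\text{Sp}_{2n}$-module). For $m=1$ the only invariant is the pairing, so $\cO(V\oplus V^*)^{\text{Sp}_{2n}}=\mathbb{C}[e]$ is a polynomial ring; for $2\le m\le n$ Weyl's first fundamental theorem gives the $\binom{2m}{2}=\dim\gs\go_{2m}$ symplectic pairings as generators, and since $2m\le 2n$ his second fundamental theorem (whose relations are $(2n+2)\times(2n+2)$ Pfaffians) produces none, so in all these cases $V\oplus V^*$ is coregular. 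At the boundary $m=n+1$ we have $2m=2n+2$ and $(V\oplus V^*)/\!\!/\text{Sp}_{2n}$ is the hypersurface cut out by the single full Pfaffian. In the coregular range Theorem \ref{jetthm} applies (the hypothesis on one-dimensional invariants is automatic since $\text{Sp}_{2n}$ is semisimple), and at $m=n+1$ I would invoke the corresponding hypersurface theorem of \cite{LSS}; either way \eqref{arcmap} is an isomorphism. The quadratic generators are precisely the symbols of $e$ (for $m=1$, where $\gs\go_2$ is abelian and $\Theta'=\cH$) and of $S_{j,k},D_{j,k},E_{j,k}$ (for $m\ge 2$), which are the generators of $\Theta'$, and an OPE computation shows these lie in $\cS(V)^{\gs\gp_{2n}[t]}=\text{Com}(\Theta,\cS(V))$. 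Hence \eqref{injgamm} is surjective and $\cS(V)^{\gs\gp_{2n}[t]}=\Theta'$. For $m=1$ one checks in addition that $e(z)e(w)$ is a nonzero multiple of $(z-w)^{-2}$ and that there are no relations, so $\Theta'$ is freely strongly generated by $e$ and equals the Heisenberg algebra $\cH$.

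For $m>n$ I would instead regard $V\oplus V^*$ as the $2n$ copies of the standard $\text{SO}_{2m}$-module $\mathbb{C}^{2m}$ described before the theorem. Since $2n<2m$, Weyl's theorems for the orthogonal group give neither relations among the inner-product invariants nor any determinantal invariants, so $(V\oplus V^*)/\!\!/\text{SO}_{2m}$ is smooth; as $m>n\ge 2$ forces $m\ge 3$, $\text{SO}_{2m}$ is semisimple and Theorem \ref{jetthm} again makes the relevant \eqref{arcmap} an isomorphism. The quadratic generators are the symbols of the generators of $\Theta=\hat\tau(V_{-m}(\gs\gp_{2n}))$, and since the same OPE computation gives $\Theta\subset\text{Com}(\Theta',\cS(V))=\cS(V)^{\gs\go_{2m}[t]}$, surjectivity of \eqref{injgamm} yields $\cS(V)^{\gs\go_{2m}[t]}=\Theta$. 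Finally, using $\Theta'\subset\cS(V)^{\gs\gp_{2n}[t]}$, the chain
\[\Theta\subset\text{Com}(\cS(V)^{\gs\gp_{2n}[t]},\cS(V))\subset\text{Com}(\Theta',\cS(V))=\cS(V)^{\gs\go_{2m}[t]}=\Theta\]
forces all inclusions to be equalities, giving $\text{Com}(\cS(V)^{\gs\gp_{2n}[t]},\cS(V))=\Theta$, so that $\Theta$ is a member of a Howe pair.

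The main obstacle is twofold. The delicate classical input is fixing the coregular and complete-intersection ranges exactly, so that the correct result of \cite{LSS} is applied: the hypersurface case $m=n+1$ sits on the boundary of coregularity and must be argued separately from $2\le m\le n$, and for the double commutant one must switch viewpoint and count copies of the standard $\text{SO}_{2m}$-module rather than of the standard $\text{Sp}_{2n}$-module. The genuinely new computations are the OPEs showing that the quadratic generators of $\Theta$ and $\Theta'$ quantum-commute with the opposite current algebra; these are routine in principle but require checking that all double contractions cancel and that the level shifts recorded in \eqref{explicitsp} exactly absorb the normal-ordering anomalies.
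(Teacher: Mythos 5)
Your proposal is correct and follows essentially the same route as the paper: coregularity of $V\oplus V^*$ for $m\le n$ (plus Theorem 4.5 of \cite{LSS} for the hypersurface case $m=n+1$) combined with an OPE check to get surjectivity of \eqref{injgamm}, and, for $m>n$, the switch to viewing $V\oplus V^*$ as $2n$ copies of the standard $\gs\go_{2m}$-module exactly as in the proof of Theorem \ref{howeson}, followed by the same double-commutant inclusion chain. The paper's proof is merely terser, citing those earlier arguments rather than spelling them out as you do.
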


\begin{proof} The first statement and the second statement in the case $m\leq n$ are clear because $V\oplus V^*$ is coregular in these cases and \eqref{injgamm} is easily seen to be surjective. The case $m=n+1$ in which $(V\oplus V^*)/\!\!/ \text{Sp}_{2n}$ is a hypersurface follows from Theorem 4.5 of \cite{LSS}. The proof of the third statement is the same as the proof of Theorem \ref{howeson}. Note that for $m\leq n$, $\Theta' \cong V_{-n}(\gs\go_{2m})$, and for $m = n+1$, all relations among the generators of $\Theta'$ and their derivatives are consequences of a single relation.
\end{proof}

\section{Commutants inside $bc$-systems and $bc\beta\gamma$-systems}

In some cases, our methods can be applied to describe commutant subalgebras of $bc$-systems and $bc\beta\gamma$-systems. Given a finite-dimensional vector space $V$, the $bc$-system $\cE(V)$ was introduced in \cite{FMS}. It is the unique vertex superalgebra with odd generators $b^{x}$, $c^{x'}$ for $x\in V$, $x'\in V^*$, which satisfy
\begin{equation} \begin{split} &b^x(z)c^{x'}(w)\sim\langle x',x\rangle (z-w)^{-1}, \qquad c^{x'}(z)b^x(w)\sim \langle x',x\rangle (z-w)^{-1},\\ &b^x(z)b^y(w)\sim 0, \qquad c^{x'}(z)c^{y'}(w)\sim 0.\end{split} \end{equation} We give $\cE(V)$ the conformal structure $L_{\cE} = -\sum_{i=1}^n :b^{x_i}\partial c^{x'_i}:$ under which $b^{x_i}$, $c^{x'_i}$ are primary of conformal weights $1$, $0$, respectively. As usual, $\{x_1,\dots, x_n\}$ is a basis for $V$ and $\{x'_1,\dots,x'_n\}$ is the dual basis for $V^*$.

Let $G$ be a connected, reductive complex group with Lie algebra $\gg$, and let $V$ be a $G$-module. There is a vertex algebra homomorphism $V_{1}(\gg,B)\rightarrow \cE(V)$ which is analogous to (\ref{deftheta}), given by
\begin{equation} \label{defthetabc} X^{\xi} \mapsto \theta_{\cE}^{\xi} =  \sum_{i=1}^n :c^{x'_i} b^{\rho(\xi)(x_i)}:.\end{equation} Here $B(\xi,\eta) = \text{Tr}(\rho(\xi) \rho(\eta))$ and $\rho:\gg\rightarrow \text{End}(V)$ is the induced representation of $\gg$. There is a good increasing filtration on $\cE(V)$, where $\cE(V)_{(r)}$ is spanned by normally ordered monomials in $b^{x_i}$, $c^{x'_i}$ and their derivatives, of length at most $r$. We have $\cE(V) \cong \text{gr}(\cE(V))$ as linear spaces, and as supercommutative algebras we have $$\text{gr}(\cE(V)) \cong \bigwedge \bigoplus_{k\geq 0} (V_k \oplus V^*_k), \qquad V_k = \{b^x_k|\ x\in V\}, \qquad V^*_k = \{c^{x'}_k|\ x'\in V^*\}.$$ Here $b^x_k$, $c^{x'}_k$ are the images of $\partial^k b^x$, $\partial^k c^{x'}$ in $\text{gr}(\cE(V))$. The map $V_1(\gg,B)\rightarrow \cE(V)$ induces an action of $\gg[t]$ on $\text{gr}(\cE(V))$ by superderivations of degree zero, defined by 
\begin{equation}\label{actionthetabc}\xi t^r(b^x_i) = \lambda^r_i b^{\rho(\xi)(x)}_{i-r}, \qquad \xi t^r(c^{x'}_i) = \lambda^r_i c^{\rho^*(\xi)(x')}_{i-r}, \qquad \lambda^r_i = \bigg\{ \begin{matrix} \frac{i!}{(i-r)!}  & 0\leq r\leq i \cr 0 & r>i \end{matrix}.\end{equation}

We may consider the commutant $\text{Com}(\Theta, \cE(V)) = \cE(V)^{\gg[t]}$, where $\Theta$ is the image of \eqref{defthetabc}. More generally, given representations $V$ and $W$ of $G$, the $bc\beta\gamma$-system $\cE(V)\otimes \cS(W)$ carries an action of $V_1(\gg,B)$ given by \begin{equation} \label{thetabcbg} X^{\xi} \mapsto \theta^{\xi}_{\cE\otimes \cS} = \theta^{\xi}_{\cE} \otimes 1 + 1\otimes \theta^{\xi}_{\cS}.\end{equation} Here $\theta^{\xi}_{\cS}$ is given by \eqref{deftheta}, and $B(\xi,\eta) = \text{Tr}(\rho_1(\xi) \rho_1(\eta) ) -\text{Tr}(\rho_2(\xi) \rho_2(\eta))$, where $\rho_1: \gg\rightarrow \text{End}(V)$ and $\rho_2: \gg\rightarrow \text{End}(W)$ are the induced representations of $\gg$. We may also consider $\text{Com}(\Theta,\cE(V)\otimes \cS(W)) = (\cE(V)\otimes \cS(W))^{\gg[t]}$, where $\Theta$ is the image of \eqref{thetabcbg}.

In order to study these commutants, we need to extend our result on invariant theory and arc spaces to the case of odd as well as even variables. Given a $G$-representation $V$, let $V^*_j\cong V^*$ for $j\geq 0$, and fix a basis $\{x_{1,j},\dots, x_{n,j}\}$ for $V^*_j$. Let $S = \text{Sym}(\bigoplus_{j\geq 0} V^*_j)$. The map $\cO(V_{\infty}) \rightarrow S$ sending $x_i^{(j)}\mapsto x_{i,j}$ is an isomorphism of differential algebras, where the differential $D$ on $S$ is given by $D(x_{i,j}) = x_{i,j+1}$.

For $j\geq 0$, let $\tilde{V}^*_j\cong V^*$ and let $L= \bigwedge \bigoplus_{j\geq 0} \tilde{V}^*_j$. Fix a basis $\{y_{1,j},\dots, y_{n,j}\}$ for $\tilde{V}^*_j$ and extend the differential on $S$ to a super differential $D$ on $S\otimes L$, defined on generators by $D(y_{i,j}) = y_{i,j+1}$. There is an action of $G_{\infty}$ on $S\otimes L$, and we may consider the invariant ring $(S\otimes L)^{G_{\infty}}$. Let $S_0 = \text{Sym}(V^*_0)\subset S$ and $L_0 = \bigwedge (\tilde{V}^*_0)\subset L$, and let $\bra (S_0\otimes L_0)^G\ket$ be the differential algebra generated by $(S_0\otimes L_0)^G$, which lies in $(S\otimes L)^{G_{\infty}}$.

Since $G$ acts on the direct sum $V^{\oplus k}$ of $k$ copies of $V$, we have a map \begin{equation} \label{kcopiesofv}\cO(\quot{V^{\oplus k}} G)_{\infty} \rightarrow \cO(V^{\oplus k}_{\infty})^{G_{\infty}}.\end{equation} 
\begin{thm} \label{oddgen} Suppose that \eqref{kcopiesofv} is an isomorphism for all $k\geq 1$. Then $(S\otimes L)^{G_{\infty}}=\bra (S_0\otimes L_0)^G\ket$. \end{thm}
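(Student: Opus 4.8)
The plan is to reduce the super (even/odd) statement to the purely even invariant theory encoded in the hypothesis that \eqref{kcopiesofv} is an isomorphism for all $k$. The inclusion $\bra (S_0\otimes L_0)^G\ket \subseteq (S\otimes L)^{G_\infty}$ is already noted, so only the reverse inclusion needs proof. First I would record the exact consequence of the hypothesis that I will use: since $\cO(Y_\infty)$ is always generated as a differential algebra by its degree-zero part $\cO(Y)$, the isomorphism \eqref{kcopiesofv} applied to $Y=\quot{V^{\oplus k}}{G}$ shows that $\cO((V^{\oplus k})_\infty)^{G_\infty}$ is generated as a differential algebra by the degree-zero invariants $\cO(V^{\oplus k})^G$.

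Next I would grade $L$ by exterior degree; since both $G_\infty$ and $D$ preserve this grading, it suffices to prove $\big(S\otimes \bigwedge^p(\bigoplus_{j\geq 0}\tilde V^*_j)\big)^{G_\infty}\subseteq \bra (S_0\otimes L_0)^G\ket$ for each fixed $p$. The key device is to trade the $p$ odd directions for $p$ auxiliary even copies of $V$. Writing $\tilde S = \cO((V^{\oplus(1+p)})_\infty)$, with one copy the original $V$ (contributing $S$) and the remaining $p$ auxiliary, the symmetric group $\gS_p$ acts on $\tilde S$ by permuting the auxiliary copies and commutes with both $G_\infty$ and $D$. The part of $\tilde S$ that is multilinear in the $p$ auxiliary copies is $S\otimes (\bigoplus_{j\geq 0}V^*_j)^{\otimes p}$, whose $\gS_p$ sign-isotypic component is exactly $S\otimes \bigwedge^p(\bigoplus_{j\geq 0}\tilde V^*_j)$. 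Because $G_\infty$ acts diagonally and commutes with $\gS_p$, applying the alternating projector $\frac{1}{p!}\sum_{\sigma\in\gS_p}\mathrm{sgn}(\sigma)\,\sigma$ yields a $D$-equivariant surjection
$$\Psi\colon \big(\tilde S^{G_\infty}\big)^{\mathrm{mult}}\twoheadrightarrow \big(S\otimes \textstyle\bigwedge^p\big)^{G_\infty},$$
which I think of as ``antisymmetrize the auxiliary copies and collapse them to the odd variables.''

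It then remains to show that $\Psi$ carries the multilinear part of $\tilde S^{G_\infty}$ into $\bra (S_0\otimes L_0)^G\ket$. By the consequence of the hypothesis recorded above, every element of $\tilde S^{G_\infty}$ is a polynomial in the iterated derivatives $D^\ell g$ of degree-zero invariants $g\in \cO(V^{\oplus(1+p)})^G$. Selecting the multilinear-in-the-$p$-copies part forces each monomial $\prod_s D^{\ell_s} g_s$ to distribute the $p$ auxiliary slots disjointly among its factors, so that the $g_s$ partition the auxiliary copies and each is multilinear in its own block. Since $D$ and the alternating projection commute, and since collapsing the auxiliary variables to odd variables turns the supercommutative product of even factors into the product in $S\otimes L$ with the correct signs, $\Psi\big(\prod_s D^{\ell_s}g_s\big)$ equals, up to sign, $\prod_s D^{\ell_s}\bar g_s$, where $\bar g_s=\Psi(g_s)\in (S_0\otimes L_0)^G$ is the collapse of the degree-zero even invariant $g_s$. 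Hence the image lies in $\bra(S_0\otimes L_0)^G\ket$; summing over $p$ completes the argument.

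The hard part will be the bookkeeping in this last step: verifying that the antisymmetrize-and-collapse map $\Psi$ is genuinely multiplicative up to sign on the multilinear invariants, i.e. that collapsing a product of even invariants with disjointly distributed auxiliary slots reproduces the super-product of the individually collapsed factors, and that this operation commutes with $D$. This is precisely where the passage from the symmetric world to the exterior world must be reconciled with the differential-algebra structure; once the sign conventions in the collapse are fixed, the statement follows routinely from the even case supplied by \eqref{kcopiesofv}.
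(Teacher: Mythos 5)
Your proposal is correct and takes essentially the same route as the paper: both arguments trade the odd variables for auxiliary even copies of $V$ (the paper by explicitly antisymmetrizing a degree-$d$ invariant into $S\otimes S^1\otimes\cdots\otimes S^d$, you by the alternating projector $\Psi$ going in the reverse direction), then apply the hypothesis to $V^{\oplus(1+d)}$ and collapse back to the exterior algebra. The bookkeeping you flag at the end---that the collapse map is multiplicative up to sign on monomials whose factors occupy disjoint auxiliary blocks (after reducing to multihomogeneous generators) and commutes with $D$---is precisely what the paper's terse final sentence ``It follows that $p\in \bra(S_0\otimes L_0)^G\ket$'' implicitly relies on.
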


\begin{proof} Define a gradation on $S\otimes L$ by $\deg (x_{i,j}) = 0$ and $\deg (y_{i,j}) = 1$. For $p \in (S\otimes L)^{G_{\infty}}$, we can assume $p$ is homogeneous of degree $d$. Then $p$ can be written in the form $$p=\sum f y_{i_1,j_1}\wedge \cdots \wedge y_{i_d,j_d},$$ where $y_{i_t,j_t} \in \tilde{V}^*_{j_t}$ and $f\in S$.

For $a=1,\dots,d$, let $S^a$ be the copy of $S$ with generators $z^a_{i,j}$ for $i=1,\dots, n$, and $j\geq 0$. Define $q\in S \otimes (S^1\otimes \cdots \otimes S^d)$ by
$$q=\sum f \sum_{\sigma \in  \mathfrak{S}_d} \text{sgn} (\sigma) z^1_{i_{\sigma(1)}, j_{\sigma(1)}} \otimes \cdots \otimes z^d_{i_{\sigma(d)}, j_{\sigma(d)}},$$
where $\mathfrak{S}_d$ is the permutation group of $d$ elements. Since $p\in (S\otimes L)^{G_{\infty}}$, it follows that $q\in (S\otimes (S^1\otimes \cdots \otimes S^d))^{G_{\infty}}$. We have $$(S\otimes (S^1\otimes \cdots \otimes S^d))^{G_{\infty}} \cong \cO(W_{\infty})^{G_{\infty}} \cong \cO((\quot WG)_{\infty}),$$ where $W = \bigoplus_{i=1}^{d+1} V_i$ and $V_i \cong V$. Since $\cO(\quot WG) \cong (S_0 \otimes (S^1_0\otimes \cdots \otimes S^d_0))^G$, where $S^a_0 \cong S_0$, $q$ lies in the differential algebra $\bra (S_0 \otimes (S^1_0\otimes \cdots \otimes S^d_0))^G\ket$ generated by $(S_0 \otimes (S^1_0\otimes \cdots \otimes S^d_0))^G$. It follows that $p\in \bra(S_0\otimes L_0)^G\ket$. \end{proof}

The following result is an immediate consequence of Theorem \ref{oddgen}.

\begin{cor} \label{odd} Let $G = \text{SL}_2$, and let $V$ be the sum of finitely many copies of the standard representation. Then $(L\otimes S)^{G_{\infty}} =  \bra(S_0\otimes L_0)^G\ket$, where $L$, $S$, $L_0$, and $S_0$ are as above.
\end{cor}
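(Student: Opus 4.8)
The plan is to verify the sole hypothesis of Theorem \ref{oddgen}---that the comparison map \eqref{kcopiesofv} is an isomorphism for every $k \geq 1$---and then to quote that theorem directly. Since $V$ is by assumption a direct sum of, say, $\ell$ copies of the standard representation of $\text{SL}_2$, the representation $V^{\oplus k}$ occurring in \eqref{kcopiesofv} is again a direct sum of copies of the standard representation, namely $k\ell$ of them. Thus the class of representations to which I must apply the arc-space comparison map is closed under the passage from $V$ to $V^{\oplus k}$, and the verification reduces to a single invariant-theoretic input.

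That input is the last of the examples recorded just after Theorem \ref{jetthm}: the map \eqref{arcmap} is an isomorphism whenever $G = \text{SL}_2$ and the representation is an \emph{arbitrary} finite sum of copies of the standard representation. What matters here is that this statement carries no coregularity or complete-intersection proviso---unlike the corresponding statements for the other classical groups---so it applies for all $k$ simultaneously, even in the range where $V^{\oplus k}/\!\!/\text{SL}_2$ is far from a complete intersection. Since \eqref{kcopiesofv} for the representation $V$ is precisely \eqref{arcmap} for the representation $V^{\oplus k}$, this yields the required isomorphism for each $k \geq 1$.

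With the hypothesis in hand, Theorem \ref{oddgen} gives $(S\otimes L)^{G_{\infty}} = \langle (S_0\otimes L_0)^G\rangle$, and since the tensor factors $S$ and $L$ may be interchanged without affecting the invariant ring, this is exactly the asserted equality $(L\otimes S)^{G_{\infty}} = \langle (S_0\otimes L_0)^G\rangle$. The only genuine point of care---what I would flag as the main obstacle---is ensuring that the $\text{SL}_2$ input is invoked in its unconditional form: one must use the special result for copies of the two-dimensional standard representation rather than Theorem \ref{jetthm}, which would fail as soon as $V^{\oplus k}$ ceases to be coregular. Everything else is a formal substitution into Theorem \ref{oddgen}.
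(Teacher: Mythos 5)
Your proposal is correct and follows the same route as the paper: the paper labels the corollary an immediate consequence of Theorem \ref{oddgen}, the hypothesis of which holds because $V^{\oplus k}$ is again a finite sum of copies of the standard representation of $\text{SL}_2$, for which the unconditional isomorphism statement from \cite{LSS} (the last example quoted after Theorem \ref{jetthm}) applies with no coregularity or complete-intersection requirement. Your explicit flagging of why Theorem \ref{jetthm} itself would not suffice is exactly the right point of care, and the $S\otimes L$ versus $L\otimes S$ remark is a harmless notational observation.
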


The analogue of Theorem \ref{minrelations} holds in the setting of $bc$-systems and $bc\beta\gamma$-systems, and the proof is the same. 

For $G = \text{SL}_2$ and $V$ the sum of $m$ copies of $\mathbb{C}^2$ with basis $\{x_{1,j}, x_{2,j}|\ j=1,\dots, m\}$ there is an induced vertex algebra homomorphism $V_m(\gs\gl_2)\rightarrow \cE(V)$ given in the standard basis $x,y,h$ for $\gs\gl_2$ by $$X^{x} \mapsto \sum_{j=1}^m :b^{x_{1,j}} c^{x'_{2,j}}: , \qquad X^{y} \mapsto \sum_{j=1}^m :b^{x_{2,j}} c^{x'_{1,j}}:,\qquad X^{h}\mapsto  \sum_{j=1}^m :b^{x_{1,j}} c^{x'_{1,j}}: - :b^{x_{2,j}} c^{x'_{2,j}}:.$$
Let $\Theta$ be the image of $V_m(\gs\gl_2)$ inside $\cE(V)$.

\begin{thm} \label{oddsln} For all $m\geq 1$, $\text{Com}(\Theta,\cE(V)) = \cE(V)^{\gs\gl_2[t]}$ has a minimal strong generating set \begin{equation} \label{oddslngenerators} \begin{split} &\psi^{i,j} = \ :b^{x_{1,i}} c^{x'_{1,j}}: + :b^{x_{2,i}} c^{x'_{2,j}}:,\qquad 1\leq i \leq m,\qquad 1 \leq j\leq m,\\ & D_{k,l} = \ :b^{x_{1,k}} b^{x_{2,l}}:+ : b^{x_{1,l}} b^{x_{2,k}}:,\quad D'_{k,l} =\ :c^{x'_{1,k}} c^{x'_{2,l}}:+ : c^{x'_{1,l}} c^{x'_{2,k}}:,\quad 1 \leq k\leq l \leq m.\end{split} \end{equation}
The vertex algebra generated by the $\psi^{i,j}$ is the image of the map $V_2(\gg\gl_m)\rightarrow \cE(V)$ induced by the right action of $\text{GL}_m$ on $V$. All normally ordered polynomial relations among the generators \eqref{oddslngenerators} and their derivatives come from Weyl's second fundamental theorem of invariant theory for the standard representation of $\text{SL}_2$, in the sense of Lemma \ref{idealrecon}.\end{thm}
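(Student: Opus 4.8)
The plan is to run the argument of Theorem \ref{sl2caseii} in the odd setting, using Corollary \ref{odd} in place of the arc-space isomorphism \eqref{arcmap} together with the $bc$-analogues of Lemma \ref{reconlem} and Theorem \ref{minrelations}. The relevant $\partial$-ring is $\text{gr}(\cE(V)) \cong \bigwedge \bigoplus_{k\geq 0}(V_k\oplus V_k^*)$, and by \eqref{actionthetabc} the $\gs\gl_2[t]$-action on it is the arc-space action on the odd coordinates of $W = V\oplus V^*$. Writing $L = \bigwedge\bigoplus_{j\geq 0}\widetilde W^*_j$ for this exterior algebra, I would first identify $\text{gr}(\cE(V))^{\gs\gl_2[t]}\cong L^{G_\infty}$. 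Since $(\mathbb{C}^2)^*\cong\mathbb{C}^2$ as $\text{SL}_2$-modules, $W$ is a sum of copies of the standard representation, so Corollary \ref{odd} applies; passing to the component with no symmetric variables (the grading by the number of even arc-coordinates is preserved by $G_\infty$, by $D$, and by multiplication) yields $L^{G_\infty} = \langle(\bigwedge(V\oplus V^*))^{\text{SL}_2}\rangle$. Hence $\text{gr}(\cE(V))^{\gs\gl_2[t]}$ is generated as a $\partial$-ring by the classical invariants $(\bigwedge(V\oplus V^*))^{\text{SL}_2}$.

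Next I would compute these classical invariants. By the first fundamental theorem for $\text{SL}_2$ acting on copies of the standard module, the invariant ring is generated by the quadratic invariants: the contraction of a $b$-copy with a $c$-copy, and the $2\times 2$ brackets of two $b$-copies or of two $c$-copies. Because the generators are odd, anticommutativity converts the antisymmetric bracket $b^{x_{1,k}}b^{x_{2,l}} - b^{x_{2,k}}b^{x_{1,l}}$ into the symmetric combination $D_{k,l}$, and likewise for $D'_{k,l}$; this is exactly why the index range in \eqref{oddslngenerators} is $k\leq l$. These quadratics are precisely the leading symbols of $\psi^{i,j}$, $D_{k,l}$, and $D'_{k,l}$.

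I would then verify that the listed vertex operators lie in $\cE(V)^{\gs\gl_2[t]}$, so that the $bc$-analogue of the inclusion \eqref{injgamm} is surjective, hence an isomorphism. For $D_{k,l}$ and $D'_{k,l}$, which involve only $b$'s or only $c$'s, the nonnegative circle products with $\theta_{\cE}^{\xi}$ admit no double contractions, exactly as in Theorem \ref{sl2caseii}, so they vanish. Invariance of the $\psi^{i,j}$ is the vertex-algebra form of $\text{SL}_2$-$\text{GL}_m$ Howe duality: the $\psi^{i,j}$ are the images of the standard basis of $\gg\gl_m$ under the homomorphism $V_2(\gg\gl_m)\to\cE(V)$ coming from the right $\text{GL}_m$-action, and a direct OPE computation shows this image commutes with $\Theta$, the image of $V_m(\gs\gl_2)$; this simultaneously proves the assertion about $V_2(\gg\gl_m)$ with its level. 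With surjectivity established, Lemma \ref{reconlem} shows $\{\psi^{i,j},D_{k,l},D'_{k,l}\}$ strongly generates $\cE(V)^{\gs\gl_2[t]}$, the $bc$-analogue of Theorem \ref{minrelations} gives minimality (the leading symbols form a minimal generating set, and the three families are separated by $bc$-charge), and all normally ordered relations are quantum corrections, in the sense of Lemma \ref{idealrecon}, of the Plücker-type relations furnished by Weyl's second fundamental theorem for $\text{SL}_2$.

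The main obstacle is the invariance of the charge-zero generators $\psi^{i,j}$ and the resulting surjectivity of the $bc$-analogue of \eqref{injgamm}. Unlike the determinants, $\psi^{i,j}$ shares both $b$'s and $c$'s with $\theta_{\cE}^{\xi}$, so its OPE with $\theta_{\cE}^{\xi}$ does contain contractions that must be shown to cancel; this is the one genuinely computational point. Everything else is a transcription of the $\beta\gamma$ arguments, with the crucial new input being the odd-variable invariant-theory statement of Theorem \ref{oddgen} and Corollary \ref{odd}, which is what guarantees that the classical quadratic invariants already generate the full arc-space invariant ring, so that no higher-degree generators are needed.
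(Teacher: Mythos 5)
Your proposal is correct and follows essentially the same route as the paper's proof: use Corollary \ref{odd} to show that $\text{gr}(\cE(V))^{\gs\gl_2[t]} \cong \bigwedge \bigoplus_{k\geq 0}(V_k\oplus V^*_k)^{\gs\gl_2[t]}$ is generated as a $\partial$-ring by the weight-zero classical invariants $q_{i,j}$, $d_{k,l}$, $d'_{k,l}$, check that the corresponding vertex operators $\psi^{i,j}$, $D_{k,l}$, $D'_{k,l}$ lie in $\cE(V)^{\gs\gl_2[t]}$ so that the $bc$-analogue of \eqref{injgamm} is an isomorphism, and then invoke Lemma \ref{reconlem}, the $bc$-analogue of Theorem \ref{minrelations}, and Lemma \ref{idealrecon}. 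You also correctly supply details the paper leaves implicit, notably the reduction of Corollary \ref{odd} to the purely odd component and the anticommutativity explanation for the symmetric index range $k\leq l$.
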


\begin{proof} We have an injective homomorphism of differential supercommutative rings 
\begin{equation} \label{sing} \text{gr}(\cE(V)^{\gs\gl_2[t]}) \rightarrow \text{gr}(\cE(V))^{\gs\gl_2[t]} \cong \bigwedge \bigoplus_{k\geq 0} (V_k \oplus V^*_k)^{\gs\gl_2[t]},\end{equation} By Corollary \ref{odd}, $\bigwedge \bigoplus_{k\geq 0} (V_k \oplus V^*_k)^{\gs\gl_2[t]}$ is generated as a differential algebra by its weight-zero component $\bigwedge(V_0\oplus V^*_0)^{\text{SL}_2}$, which is generated by $q_{i,j} = x_{1,i} x'_{1,j} + x_{2,i} x'_{2,j}$ for $i,j = 1,\dots,m$, together with $d_{k,l} = x_{1,k} x_{2,l} + x_{1,l}x_{2,k}$ and $d'_{k,l} = x'_{1,k} x'_{2,l} + x'_{1,l}x'_{2,k}$ for $1 \leq k\leq l\leq m$. The corresponding elements of $\cE(V)$ are precisely $\psi^{i,j}, D_{k,l}, D'_{k,l}$, and a calculation shows that these elements lie in $\cE(V)^{\gs\gl_2[t]}$. Therefore (\ref{sing}) is an isomorphism, and the claim follows. \end{proof}

Next, we consider the $bc\beta\gamma$-system $\cE(V)\otimes \cS(W)$, where $V$ is the sum of $r$ copies of $\mathbb{C}^2$ with basis $\{x_{1,j},x_{2,j}|\ j=1,\dots, r\}$, and $W$ is the sum of $s$ copies of $\mathbb{C}^2$ with basis $\{y_{1,k}, y_{2,k}|\ k=1,\dots, s\}$. The actions of $V_r(\gs\gl_2)$ and $V_{-s}(\gs\gl_2)$ on $\cE(V)$ and $\cS(W)$, respectively, give rise to an action of $V_{r-s}(\gs\gl_2)$ on $\cE(V)\otimes \cS(W)$, given by $$X^{x} \mapsto \sum_{j=1}^r :b^{x_{1,j}} c^{x'_{2,j}}: - \sum_{k=1}^s :\beta^{y_{1,k}} \gamma^{y_{2,k}}: , \qquad X^{y} \mapsto \sum_{j=1}^r :b^{x_{2,j}} c^{x'_{1,j}}: - \sum_{k=1}^s :\beta^{y_{2,k}} \gamma^{y_{1,k}}:,$$
 $$X^{h}\mapsto  \sum_{j=1}^r :b^{x_{1,j}} c^{x'_{1,j}}: - :b^{x_{2,j}} c^{x'_{2,j}}: - (\sum_{k=1}^s :\beta^{y_{1,k}} \gamma^{y'_{1,k}}: - :\beta^{y_{2,k}} \gamma^{y'_{2,k}}:).$$ Let $\Theta$ be the image of $V_{r-s}(\gs\gl_2)$ inside $\cE(V)\otimes \cS(W)$.

\begin{thm} \label{evenoddsln} For all $r,s \geq 1$, $\text{Com}(\Theta,\cE(V)\otimes \cS(W)) = (\cE(V)\otimes \cS(W))^{\gs\gl_2[t]}$ has a minimal strong generating set \begin{equation} \label{evenoddslnequation} \begin{split} & \psi_{\beta\gamma}^{i,j} = \ :\beta^{y_{1,i}} \gamma^{y'_{1,j}}: + :\beta^{y_{2,i}} \gamma^{y'_{2,j}}:, \qquad 1\leq i \leq s, \qquad 1 \leq j\leq s,\\
& \psi_{bc}^{k,l} =\ :b^{x_{1,k}} c^{x'_{1,l}}: +  :b^{x_{2,k}} c^{x'_{2,l}}:, \qquad 1\leq k \leq r, \qquad 1 \leq l\leq r,\\
& \psi_{\beta c}^{i,k} = \ :\beta^{y_{1,i}} c^{x'_{1,k}}: + :\beta^{y_{2,i}} c^{x'_{2,k}}:, \qquad 1\leq i \leq s, \qquad 1 \leq k\leq r,\\
&\psi_{b\gamma}^{k,i} = \ :b^{x_{1,k}} \gamma^{y'_{1,i}}: + :b^{x_{2,k}} \gamma^{y'_{2,i}}:, \qquad 1\leq k \leq r, \qquad 1\leq i\leq s,\\
& D_{k,l} =\ :b^{x_{1,k}} b^{x_{2,l}}: + :b^{x_{1,l}} b^{x_{2,k}}:,\quad D'_{k,l} =\ :c^{x'_{1,k}}c^{x'_{2,l}}: + :c^{x'_{1,l}}c^{x'_{2,k}}:, \quad 1\leq k\leq l \leq r,\\
&E_{i,k} =\ :\beta^{x_{1,i}} b^{x_{2,k}} : - :\beta^{x_{2,i}} b^{x_{1,k}}:, \quad E'_{i,k} =\ :\gamma^{x'_{1,i}}c^{x'_{2,k}}: - :\gamma^{x'_{2,i }}c^{x'_{1,k }}:, \quad 1\leq i\leq s,\quad 1\leq k \leq r,\\
& F_{i,j} =\ :\beta^{x_{1,i}} \beta^{x_{2,j}}: - :\beta^{x_{1,j}} \beta^{x_{2,i}}:, \quad  F'_{i,j}=\ :\gamma^{x'_{1,i}}\gamma^{x'_{2,j}}: - :\gamma^{x'_{1,j}}\gamma^{x'_{2,i}}: , \quad 1\leq i<j \leq s.\end{split} \end{equation}
The elements $\psi_{\beta\gamma}^{i,j}$, $\psi_{bc}^{k,l}$, $\psi_{b\gamma}^{k,i}$, $\psi_{\beta c}^{i,k}$, generate an affine vertex superalgebra associated to $\gg\gl(r|s)$. 
All normally ordered polynomial relations among the generators \eqref{evenoddslnequation} and their derivatives come from Weyl's second fundamental theorem of invariant theory for the standard representation of $\text{SL}_2$, in the sense of Lemma \ref{idealrecon}.
\end{thm}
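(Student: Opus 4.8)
The plan is to follow the strategy used for Theorems~\ref{oddsln} and~\ref{sl2caseii}: transport the computation of $(\cE(V)\otimes\cS(W))^{\gs\gl_2[t]}$ to a problem in the classical invariant theory of $\text{SL}_2$ via the good increasing filtration, and then lift the classical generators and relations back to vertex operators. First I would write down the injective map of differential supercommutative rings
$$\text{gr}\big((\cE(V)\otimes\cS(W))^{\gs\gl_2[t]}\big)\hookrightarrow \text{gr}(\cE(V)\otimes\cS(W))^{\gs\gl_2[t]},$$
the exact analogue of \eqref{sing}. Since $\text{gr}(\cE(V)\otimes\cS(W))\cong\text{gr}(\cE(V))\otimes\text{gr}(\cS(W))$, the target is identified with $(L\otimes S)^{(\text{SL}_2)_\infty}$, where $L=\bigwedge\bigoplus_{k\geq 0}(V_k\oplus V_k^*)$ carries the odd $bc$-variables and $S=\text{Sym}\bigoplus_{k\geq 0}(W_k\oplus W_k^*)$ carries the even $\beta\gamma$-variables.

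Next I would reduce to the weight-zero part. Because both $V\oplus V^*$ and $W\oplus W^*$ are sums of copies of the standard representation of $\text{SL}_2$, I would apply the extension of Theorem~\ref{oddgen} in which the even and odd variables are built from different sums of copies of $\mathbb{C}^2$. This extension is immediate: the only input to the proof of Theorem~\ref{oddgen} is that \eqref{kcopiesofv} is an isomorphism for $\text{SL}_2$ with an arbitrary number of copies of the standard representation, and the polarization step there replaces the $d$ odd factors by $d$ additional even copies, which is harmless here. It follows that $(L\otimes S)^{(\text{SL}_2)_\infty}$ is generated as a differential algebra by its weight-zero component $(L_0\otimes S_0)^{\text{SL}_2}$.

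I would then identify $(L_0\otimes S_0)^{\text{SL}_2}$ by Weyl's first fundamental theorem for the standard representation of $\text{SL}_2$. Viewing the $b$-, $c$-, $\beta$-, $\gamma$-data as $2r+2s$ copies of $\mathbb{C}^2$ (the $b$ and $\beta$ being ``vectors,'' the $c$ and $\gamma$ ``covectors''), the invariant ring is generated by the pairwise symplectic contractions $\epsilon^{ab}u_av_b$. A parity count matches these to the symbols of \eqref{evenoddslnequation}: vector--covector contractions give the dual-pairing generators $\psi_{\beta\gamma},\psi_{bc},\psi_{\beta c},\psi_{b\gamma}$, while vector--vector and covector--covector contractions give the determinants $D,E,F$ and $D',E',F'$; the odd parity of $b,c$ forces the symmetric combinations $D,D'$ (including the diagonal), and the even parity of $\beta,\gamma$ forces the antisymmetric combinations $F,F'$. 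A direct OPE check---using that the nonnegative circle products of each $\theta^\xi_{\cE\otimes\cS}$ with these quadratics admit no double contractions---shows that every operator in \eqref{evenoddslnequation} lies in $(\cE(V)\otimes\cS(W))^{\gs\gl_2[t]}$. Hence the displayed map is surjective, so it is an isomorphism, and Lemma~\ref{reconlem} yields a strong generating set; minimality follows as before since the symbols form a minimal generating set of the classical ring. The relations statement is then the super analogue of Theorem~\ref{minrelations} together with Weyl's second fundamental theorem for $\text{SL}_2$.

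It remains to establish the $\gg\gl(r|s)$ assertion, which I would prove by a direct OPE computation among the four families of $\psi$'s. The even operators $\psi_{bc}$ and $\psi_{\beta\gamma}$ span $\gg\gl_r\oplus\gg\gl_s$ and the odd operators $\psi_{\beta c},\psi_{b\gamma}$ span the off-diagonal blocks, and one checks that their singular OPEs reproduce the bracket of $\gg\gl(r|s)$ together with a central term governed by the supertrace form $B(\xi,\eta)=\text{Tr}(\rho_1(\xi)\rho_1(\eta))-\text{Tr}(\rho_2(\xi)\rho_2(\eta))$, which fixes the level. The main obstacle throughout is not conceptual but the careful sign bookkeeping introduced by the odd $bc$-variables: these signs must be tracked both in matching the symmetrization patterns of $D,E,F$ against the classical contractions and in verifying that the structure constants and central term of the $\gg\gl(r|s)$ current algebra emerge with the correct signs.
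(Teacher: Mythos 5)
Your proposal is correct and takes essentially the same route as the paper's proof, which consists precisely of an OPE verification that the operators \eqref{evenoddslnequation} lie in $(\cE(V)\otimes \cS(W))^{\gs\gl_2[t]}$ followed by an appeal to Corollary \ref{odd} to conclude that the graded map is an isomorphism; your polarization remark usefully makes explicit why the argument of Theorem \ref{oddgen} applies when the even and odd variables come from different numbers of copies of $\mathbb{C}^2$, a point the paper leaves implicit. One small correction: your parenthetical claim that the nonnegative circle products of $\theta^{\xi}_{\cE\otimes\cS}$ with these quadratics admit no double contractions is valid for $D_{k,l}$, $D'_{k,l}$, $E_{i,k}$, $E'_{i,k}$, $F_{i,j}$, $F'_{i,j}$, but not for the four $\psi$ families, where double contractions do occur and the resulting $(z-w)^{-2}$ terms cancel only because each $\xi \in \gs\gl_2$ acts tracelessly on each copy of $\mathbb{C}^2$.
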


\begin{proof} An OPE calculation shows that the vertex operators \eqref{evenoddslnequation} all lie in $(\cE(V)\otimes \cS(W))^{\gs\gl_2[t]}$. By Corollary \ref{odd}, the map $$\text{gr}((\cE(V)\otimes \cS(W))^{\gs\gl_2[t]}) \hookrightarrow \text{gr}(\cE(V)\otimes \cS(W))^{\gs\gl_2[t]}$$ is an isomorphism. This proves the claim. \end{proof}

Unfortunately, we are unable to prove the analogues of Theorems \ref{oddsln} and \ref{evenoddsln} for $\SL_n$ with $n\geq 3$ because it is not known if \eqref{arcmap} is surjective. If it were surjective, this would imply that $\cE(V)^{\gs\gl_n[t]}$ and $(\cE(V)\otimes \cS(W))^{\gs\gl_n[t]}$ have similar minimal strong generating sets, with $D,D',E,E',F,F'$ replaced by elements of degree $n$ that correspond to $n\times n$ determinants. However, unlike the case $n=2$, \eqref{arcmap} is not injective in general; see Example 6.6 of \cite{LSS}. This implies that there exist normally ordered polynomial relations among the generators and their derivatives that do not come from classical relations. Similar questions remain open for representations of the other classical groups which are sums of copies of the standard representation.

\end{document}